\newtheorem{theorem}{Theorem}[section]
\newtheorem{lemma}[theorem]{Lemma}
\newtheorem{remark}[theorem]{Remark}
\newenvironment{proof}[1][Proof]{\textbf{#1.} }{\hfill\rule{0.5em}{0.5em}}
{\catcode`\@=11\global\let\AddToReset=\@addtoreset
\AddToReset{equation}{section}

\AddToReset{theorem}{section}

\newcommand{\cM}{{\mathcal M}}
\newcommand{\cP}{{\mathcal P}}

\newcommand{\cU}{{\mathcal U}}

\newcommand{\cG}{{\mathcal G}}

\newcommand{\cS}{{\mathcal S}}

\newcommand{\A}{{\mathbb A}}
\newcommand{\R}{{\mathbb R}}

\newcommand{\N}{{\mathbb N}}
\newcommand{\F}{{\mathbb F}}

\newcommand{\Pn}{{\mathcal P}}

\newcommand{\supp}{{\rm supp}}

\newcommand{\argmin}{{\rm argmin}}
     
\newcommand{\CE}{{\rm CE}}

\newcommand{\dive}{{\rm div}}    
\newcommand{\W}{{\rm W}}    
\newcommand{\vv}{{\bf v}}    
\newcommand{\mdiv}{\operatorname{div}}

\begin{document}
\title{Gradient flows of modified Wasserstein distances and porous medium equations with nonlocal pressure}
	\author{
	{Nhan-Phu Chung\thanks{E-mail address: phuchung@skku.edu, phuchung82@gmail.com, Department of Mathematics, Sungkyunkwan University, 2066 Seobu-ro, Jangan-gu, Suwon-si, Gyeonggi-do, Korea 16419.} and Quoc-Hung Nguyen\thanks{E-mail address: qhnguyen@amss.ac.cn, Academy of Mathematics and Systems Science,
Chinese Academy of Sciences,
Beijing 100190, PR China.} }}

\maketitle
\begin{center}
Dedicated to Professor Duong Minh Duc on the occasion of his 70th birthday.
\end{center}
\begin{abstract}
We study families of porous medium equation with nonlocal pressure. We construct their weak solutions via JKO schemes for modified Wasserstein distances. We also establish the regularization effect and decay estimates for the $L^p$ norms.

\medskip

\medskip

\medskip

\noindent 

\medskip

\noindent
\end{abstract}   
                  
									
 \section{Introduction and main results} 

 Discretizing a time dependent PDE as a series of variational problem has been a common practice for a while. For example, consider the diffusion equation
 $$\partial_tu(t,x) -\Delta u(t,x) = 0, \quad x \in \R^d.$$ 
The classical discretization scheme reads as
 $$u^{k} = \argmin_u F(u),~\text{with}~F(u):= \left\{\frac{1}{\tau}\|u - u^{k-1}\|^2_{L^2(\R^d)} + \|\nabla u\|^2_{L^2(\R^d)} \right\}.$$
 This comes from the idea that diffusion equation is the gradient flux of the Dirichlet integral $\|\nabla u\|^2_{L^2(\R^d)}$. It also can be seen by considering the additive perturbation $u^k +t\varphi$ of the solution $u^k$, where $\varphi \in C^\infty_0(\R^d)$. Then, the equation $\left.\frac{d}{dt}F(u+t\varphi)\right| _{|t=0}=0$ implies 
 $$\int_{\R^d} [(u^k-u^{k-1}) \varphi + \tau \nabla u^k \nabla \varphi] dx=0.$$
That is, $u^k$ satisfies the backward finite difference scheme for the diffusion equation $u^k = u^{k-1} + \tau \Delta u^k$.
\vspace{1cm}\\A novel discretization scheme, called JKO scheme, was proposed by Jordan–Kinderlehrer–Otto in the seminal paper  \cite{JKO}, for $\tau>0$, 
\begin{equation}\label{E:wva}
	u^k = \argmin_{u\in \Pn_2(\R^d)} F_w(u),~\text{with}~F_w(u):=\left\{\frac{1}{2 \tau} \mathbf{W}^2_2(u,u^{k-1}) + \int_{\R^d} u \, \log u \, dx \right\},
\end{equation}
where $\mathbf{W}_2$ is the Wasserstein distance and $\Pn_2(\R^d)$ is the space of probability Borel measures on $\R^d$ with finite second order moment (see section 3).

The main idea in proving (\ref{E:wva}) is the perturbation by domain deformations \cite{JKO}. Namely, given $\eta\in C^\infty_c(\mathbb{R}^d)$, let $\Phi_t(x)=x+t \eta(x)$ and $u_t$ be the push forward of $u^{k}$ under $\Phi_t$. That is, for all $\varphi \in C_0^\infty(\R^d)$,
%

 \begin{equation*} \label{E:pf}
 \int_{\R^d} \varphi(x) \, u_t (x)\, dx=\int_{\R^d} \varphi(\Phi_t(x))\, u^{k}(x) \, dx.
 \end{equation*}
Then, one can obtain the a few key formulas
\begin{equation} \label{E:deri} \frac{du_t}{dt} = - \nabla(\eta u),\end{equation}
and, for $\eta = \nabla \phi$
 \begin{equation} \label{E:Wd} \frac{d}{dt}\left[\frac{1}{2\tau}\mathbf{W}^2_2(u_t,u^{k})\right]_{t=0} = \int_{\R^d} \frac{u^{k}- u^{k-1}}{\tau} \, \phi \,dx + \Lambda(\tau),\end{equation} where $\Lambda(\tau) \to 0$ as $\tau \to 0$, in some proper sense, and
 $$\frac{d}{dt} \int_{\R^d} u_t \log u_t \, dx |_{t=0}=- \int_{\R^d} \Delta u^{k} \phi dx,$$
 see \cite{JKO} for more details. 
The equation $\frac{d F_w(u_t)}{dt} =0$ (coming from the variational formulation (\ref{E:wva})), then heuristically gives the backward finite difference approximation
$$u^{k} \approx u^{k-1} + \tau \Delta u^{k}.$$
 
Moreover, let us define $u_\tau(t) = u^k$ for $t \in [k\tau, (k+1) \tau)$. Then, $u_\tau(t) \to u(t)$ weakly in in $L^1(\R^d)$ for almost all $t \in \R$ (see \cite{JKO}), where $u$ satisfies the variational form of the diffusion equation
$$\int_0^\infty \int_{\R^d} u (\phi_t - \Delta \phi) dx dt = \int_{\R^d} u(0)\phi(0) dx.$$
Therefore, the JKO scheme is a proper discretization for the diffusion equation.

Let us review a few extensions of the JKO scheme. First, we consider the equation
\begin{equation}\label{E:main}
\partial_t u - \operatorname{div}(u\nabla (-\Delta)^{-s} u)=0,
\end{equation}
where $(-\Delta)^{-s} u$ with $0<s<\min\{1,\dfrac{d}{2}\}$ denotes the inverse of the fractional Laplacian operator.

Using the following JKO scheme 
\begin{eqnarray*}
\label{E-scheme-JKO1}
& u^{k} :=\argmin_{u\in \cP_2(\R^d)}  \bigg\{\frac{1}{2\tau}\mathbf{W}_{2}^2(u,u^{k-1})+ F(u) \bigg\},
\end{eqnarray*}
where $F(u)=\dfrac{1}{2}\|u\|^2_{\dot{H}^{-s}(\R^d)}$ is the square norm of the homogeneous Sobolev space $\dot{H}^{-s}(\R^d)$, Lisini, Mainini and Segatti solved the equation (\ref{E:main}) for $0<s<\min\{1,d/2\}$ \cite{LMS18}. 

Next, we consider the equation
\begin{equation} \label{E:DNS} \partial_t u + \mdiv[h(u) |\xi|^{p-2} \xi] = 0,\end{equation}
where $p>1$, $\xi = -\mdiv \left[\frac{\delta \F}{\delta u}\right]$ and $h: [0,\infty) \to [0,\infty)$ is a nonlinear increasing function. Here $\frac{\delta \F}{\delta u}$ is the first variant of a function $\F:\cP(\R^d)\to \R\cup\{+\infty\}$.
We define the modified Wasserstein distance \cite{DNS} 
$$ \tilde{\mathbf{W}}_q^q (\mu_0,\mu_1) = \inf \bigg\{\int_0^1 \int_{\R^d} h(\rho_t) |\vv_t(x)|^q dxdt: \partial_t \rho_t + \mdiv (h(\rho_t) \vv_t) =0:  \mu_0=\rho_0\mathcal{L}^d , \mu_1=\rho_1\mathcal{L}^d\bigg\},$$
where $q= \frac{p}{p-1}$ and $\mathcal{L}^d$ is the Lebesgue measure on $\R^d$.

Then, the corresponding JKO scheme for \eqref{E:DNS} reads as
$$ u^{k} :=\argmin_{u\in \cP(\R^d)}  \bigg\{\frac{1}{q \tau^{q-1}}\tilde{\mathbf{W}}_{q}^q(u,u^k)+\F(u) \bigg\},$$
where $q= \frac{p}{p-1}$, the H\"older conjugate of $p$. Choosing $h(u) = u^\alpha$ and $\F(u) = \int_{\R^d} |\nabla u|^2 dx$,  (\ref{E:DNS}) becomes the thin-film like equation
$$\partial_t u - \mdiv [u^\alpha \nabla (-\Delta) u] =0.$$

\textbf{Statement of the main results.} In this paper, we would like to develop JKO schemes for modified Wasserstein distances for the following fractional equations. 
  \begin{equation}
  \label{E-main 1}
\partial_t u-\operatorname{div}(u^\alpha\nabla (-\Delta)^{-s}u )=0  \mbox{ in } \mathbb{R}^d.
 \end{equation}
 and 
\begin{equation}
\label{E-main 2} 
 \partial_t u+(-\Delta)^{1-s} u=0 \mbox{ in } \R^d.
 \end{equation}
for $\alpha>0$ and $0<s<\min\{1,\dfrac{d}{2}\}$. 

The first problem \eqref{E-main 1} has been studied by Caffarelli and V\'azquez in \cite{CV1} with $\alpha=1$, where the
existence of solutions was proved for non-negative bounded initial data. Regularity and asymptotic behaviour of these solutions are established in \cite{CV2, CSV, CV3}. Later, the existence of solutions for the equation \eqref{E-main 1} has been solved for the case $0<\alpha<2$    \cite{StTsVz.CRAS, StTsVz16}, and for all $\alpha>0$ \cite{NV, StTsVz18}. The second problem \eqref{E-main 2} was studies in \cite{Erbar}. The proof there consists of introducing a non-local Wasserstein distance and the entropy functionals.  

Our approaches to solve \eqref{E-main 1} and \eqref{E-main 2} are different from \cite{CV1, NV, StTsVz.CRAS, StTsVz16, StTsVz18} and \cite{Erbar} as we use JKO schemes from modified Wasserstein distances and the homogeneous Sobolev norm. With our JKO schemes, we believe that our approaches provide  more efficient computational methods for the problems.

Our JKO scheme is defined inductively as follows. Given a step size $\tau>0$ and a weight function $m:(0,+\infty)\to \R_+$, we start from some $u_0\in \cP(\R^d)$ with $\frac{1}{2}\|u_0\|^2_{\dot{H}^{-s}(\mathbb{R}^d)}<+\infty$ and define
 \begin{equation}
 \label{E-scheme}
 u^0_\tau:=u_0, u^{k}_\tau:=\argmin_{u\in \cP(\R^d)}  \bigg\{\frac{1}{2\tau}\mathbf{W}_{m}^2(u,u^{k-1}_\tau)+\frac{1}{2}\|u\|^2_{\dot{H}^{-s}(\mathbb{R}^d)}\bigg\}, 
  \end{equation}
  where $\mathbf{W}_{m}$ is the modified Wasserstein distance (see section 2) with respect to our chosen weight functions $m$. 

 The approximation $\bar{u}_\tau:[0,\infty)\to \mathcal{P}(\mathbb{R}^d)$ is defined by
  \begin{equation}
 \label{Z12}\bar{u}_\tau(t):=u^k_\tau ~\text{ for }~(k-1)\tau<t\leq k\tau. 
 \end{equation}
 
Now we state our main results. The first one is Theorem \ref{T-solutions of fractional equations} containing all the properties of our gradient flow solutions of the equation \eqref{E-main 1}. 		
  \begin{theorem}
 	\label{T-solutions of fractional equations} Let $u_0\in \mathcal{P}(\mathbb{R}^d)\cap( L^2\cap \dot{H}^{-s})(\mathbb{R}^d)$, $d\geq 2$ and $0<s<\min\{1,\dfrac{d}{2}\}$. 
 	 Then for every $\tau>0$, the scheme (\ref{E-scheme}) always  has a solution $\bar{u}_\tau$ with $m(r)=(r+\tau^{\frac{1}{10}})^\alpha$. In addition to, for every sequence $\tau_n\searrow 0$ there exist a subsequence, still denoted by $\tau_n$, and a function $u:[0,+\infty)\to \cP(\R^d)$ such that 
 	\begin{description}
 		\item[i)]$\bar{u}_{\tau_n}\to u$ strongly in $ L^2(0,T;\dot  H^\beta(\mathbb{R}^d))$, 	$\bar{u}_{\tau_n}^\alpha \, \nabla (-\Delta)^{-s}\bar{u}_{\tau_n} \to u^\alpha\nabla (-\Delta)^{-s}u$  strongly in  $L^1(\mathbb{R}^d\times(0,T))$,   $\bar{u}_{\tau_n}\to u$ weakly in  $L^2(0,T;  \dot{H}^{1-s}(\mathbb{R}^d))$ for every  $T>0,0\leq \beta<1-s$,
    \item [ii)] the function $u$ is a weak solution to the equation
    \begin{equation}
 	\partial_t u-\operatorname{div}(u^\alpha\nabla (-\Delta)^{-s}u )=0
 	\end{equation}
 	in the following weak sense
 	$$\int_0^{+\infty}\int_{\R^d}\partial_t\psi udxdt-\int_0^{+\infty}\int_{\R^d}\nabla_x \psi\cdot u^\alpha\nabla(-\Delta)^{-s}u dxdt=0, \mbox{ }\forall \psi\in C_c^{\infty}((0,+\infty)\times \R^d), $$
 	 		\item [iii)] 	for any $1\leq p\leq 2^L$
 	 		\begin{equation}
 	 		||\bar{u}_\tau(t)||_{L^p}\leq (\lambda(k_0,L)  ( t+k_0 \tau))^{-\frac{(1-1/p)d}{d\alpha+2(1-s)}},
 	 		\end{equation}
 	 		where  
 	 		\begin{equation}
 	 		\lambda(k_0,L) =\frac{c}{\sup_{1\leq n\leq L}\sup _{k\geq k_0}k \left(\left(1+\frac{1}{k-1}\right)^{\frac{(2^n-1)d}{d\alpha+2(1-s)}}-1\right)}
 	 		\end{equation} 
 	 		and 	\begin{equation}
		(\lambda(k_0,L)  \tau)^{-\frac{d}{d\alpha+2(1-s)}}\geq ||u_0||_{L^\infty},
 	 		\end{equation}
 	 			\item [iv)]  for any $t_1\geq 0$ and for any $\Lambda>0$, it holds
 	 			\begin{equation}\label{Z21'}
 	 			\sup_{t\geq t_1}||(\overline{u}_\tau(t)-\Lambda)_+||_{L^2}^2+\int_{t_1+\tau}^{\infty}||(\overline{u}_\tau(t)-\Lambda)_+||_{L^{\frac{(\alpha+2)d}{d-2(1-s)}}}^{\alpha+2} dt\leq  C ||(\overline{u}_\tau(t_1)-\Lambda)_+||_{L^2}^2.
 	 			\end{equation}
 	 			 Moreover, if $\tau<2^{-4}$ and $u(0)\in L^2$, there exists $\Lambda_0=\Lambda_0(||u_0||_{L^2},s,\alpha,d)>0$ such that 
 	 			\begin{equation}\label{Z25'}
 	 			\sup_{t\geq 1}||(\overline{u}_\tau(t)-\Lambda_0)_+||_{L^2}^2+\int_{1}^{\infty}||(\overline{u}_\tau(t)-\Lambda_0)_+||_{L^{\frac{(\alpha+2)d}{d-2(1-s)}}}^{\alpha+2} dt \leq C  \tau^{\frac{d(\alpha+1)}{2(1-s)}+2}.
 	 			\end{equation}
 	 			
 	\end{description}
 \end{theorem}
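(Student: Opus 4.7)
The strategy is the classical JKO program adapted to the modified Wasserstein distance $\mathbf{W}_m$ with the regularized weight $m(r)=(r+\tau^{1/10})^\alpha$. The $\tau^{1/10}$-shift is crucial: it enforces a strictly positive lower bound on the mobility, which both makes the continuity equation in the dynamical formulation of $\mathbf{W}_m$ well-posed and keeps $\mathbf{W}_m^2(\cdot,u^{k-1}_\tau)$ lower semicontinuous under narrow convergence. Combined with the coercivity and lower semicontinuity of $\|\cdot\|_{\dot H^{-s}}$, the direct method yields a minimizer $u^k_\tau\in\cP(\R^d)\cap L^2\cap\dot H^{-s}$ at each step of (\ref{E-scheme}).

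Next I would derive the discrete Euler--Lagrange equation by the domain deformation $\Phi_t=\mathrm{id}+t\nabla\phi$ reviewed in the introduction. Combining the variations of the two terms produces the discrete weak form
\[ \int \tfrac{u^k_\tau-u^{k-1}_\tau}{\tau}\,\phi\,dx+\int m(u^k_\tau)\,\nabla(-\Delta)^{-s}u^k_\tau\cdot\nabla\phi\,dx=o_\tau(1),\qquad \phi\in C^\infty_c(\R^d), \]
which time-discretizes (\ref{E-main 1}) with regularized mobility. Comparing $u^{k-1}_\tau$ as a competitor in the scheme and telescoping yield the fundamental energy inequality
\[ \tfrac{1}{2}\|u^k_\tau\|^2_{\dot H^{-s}}+\sum_{j=1}^{k}\tfrac{1}{2\tau}\mathbf{W}_m^2(u^j_\tau,u^{j-1}_\tau)\leq \tfrac{1}{2}\|u_0\|^2_{\dot H^{-s}}. \]
Testing the discrete equation against $(-\Delta)^{-s}u^k_\tau$ converts this into a bound $\sum_k \tau\int m(u^k_\tau)|\nabla(-\Delta)^{-s}u^k_\tau|^2\,dx\lesssim \|u_0\|^2_{\dot H^{-s}}$, which combined with the initial $L^2$-bound and a fractional interpolation argument gives an $L^2_t\dot H^{1-s}_x$-bound on $\bar u_\tau$.

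For (i) and (ii) I would use this $\dot H^{1-s}$-bound together with time-equicontinuity in a negative Sobolev norm (inferred from the discrete continuity equation and $\sum \mathbf{W}_m^2/\tau<\infty$), and invoke an Aubin--Lions--Simon type argument to extract strong $L^2(0,T;\dot H^\beta)$-convergence of a subsequence for $\beta<1-s$ together with weak $L^2(0,T;\dot H^{1-s})$-convergence. The nonlinear product $\bar u_\tau^\alpha\,\nabla(-\Delta)^{-s}\bar u_\tau$ then splits as a strongly convergent factor (for $\bar u_\tau^\alpha$ in some $L^r_{t,x}$, upgraded by the uniform $L^p$-bounds from (iii)) times a weakly $L^2$-convergent factor, so passing to the limit in the discrete weak form yields (ii). For (iii) I would employ the flow-interchange technique with the $L^p$-entropy: evolving the minimizer along a short heat flow and invoking the scheme's optimality yields a one-step recursion
\[ \tfrac{1}{p}\|u^k_\tau\|^p_{L^p}+c\tau\,\|u^k_\tau\|^{p+\alpha-1}_{L^{p^*}}\leq \tfrac{1}{p}\|u^{k-1}_\tau\|^p_{L^p},\qquad p^*=\tfrac{(p+\alpha-1)d}{d-2(1-s)}, \]
where $p^*$ arises from the fractional Sobolev embedding $\dot H^{1-s}\hookrightarrow L^{2d/(d-2(1-s))}$ applied to $u^{(p+\alpha-1)/2}$. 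Iterating along $p=2^n$ for $1\leq n\leq L$ and optimizing in $k\geq k_0$ produces both the decay exponent and the explicit constant $\lambda(k_0,L)$. Part (iv) is a De Giorgi truncation at the discrete level: inserting $(u^k_\tau-\Lambda)_+$ into the same iteration scheme yields (\ref{Z21'}), and (\ref{Z25'}) follows by picking $\Lambda_0$ large enough via (iii) so that the initial datum is absorbed after time one.

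The hardest step is to rigorously justify the variational calculations for $\mathbf{W}_m$ and the existence of an optimal velocity field when $u^k_\tau$ may vanish on large sets: this is precisely what the regularization $m(r)=(r+\tau^{1/10})^\alpha$ buys, and one must then verify that the $\tau^{1/10}$-correction disappears in the limit $\tau\to 0$, which uses the $L^p$-bounds from (iii) to ensure that $m(u^k_\tau)$ converges to $u^\alpha$ strongly enough to pass to the limit in the nonlinear product. The second delicate point is closing the $L^p$-iteration in (iii) with the explicit constant $\lambda(k_0,L)$: each doubling of the exponent produces a multiplicative constant whose supremum over $1\leq n\leq L$ must be controlled, which is exactly the origin of the supremum appearing in the denominator of $\lambda(k_0,L)$.
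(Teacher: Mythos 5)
Your overall architecture (direct method for existence, an energy/entropy dissipation estimate, compactness, flow interchange for the $L^p$ decay, De Giorgi truncation for the boundedness) matches the paper's, but two steps at the core of your plan do not go through as written. First, you propose to derive the discrete Euler--Lagrange equation by the domain deformation $\Phi_t=\mathrm{id}+t\nabla\phi$ and the expansion \eqref{E:Wd}. That expansion is specific to the quadratic Wasserstein distance, where it rests on the existence of optimal transport maps and Kantorovich duality; for the modified distance $\mathbf{W}_m$ with nonlinear mobility there is no Monge-map characterization and no known first-order expansion of $\mathbf{W}_m^2$ under push-forwards, so this step has no foundation. The paper never computes the first variation of $\mathbf{W}_m^2$ at all: it perturbs the minimizer along the auxiliary semigroup $\mathbf{S}_{\delta,h}$ solving $\partial_t v=\mathrm{div}(m(v)\nabla\varphi)+\delta\Delta v$ and proves, via the Eulerian calculus of Lemma \ref{L-Eulerian calculus}, that this semigroup is a $\lambda_\delta$-flow for $\mathbf{W}_m$, yielding the one-sided inequality \eqref{Z2}. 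That inequality, not an Euler--Lagrange identity, is what replaces your displayed discrete weak form (see \eqref{Z9}), and the whole passage to the limit in Lemma \ref{L-solution of the main theorem} is organized around it, including tracking the blow-up $\lambda_\delta\sim-\alpha(\delta^{-1}\tau^{-(1-\alpha)/5}+\tau^{(\alpha-1)/10})$ against the choice $\delta_n=\tau_n^{1/2}$. You do invoke flow interchange later for part (iii), but you need it already here.

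Second, your route to the $L^2_t\dot H^{1-s}_x$ bound does not close. Testing against $(-\Delta)^{-s}u^k_\tau$ gives control of $\sum_k\tau\int m(u^k_\tau)|\nabla(-\Delta)^{-s}u^k_\tau|^2\,dx$, a weighted quantity at the regularity level $\dot H^{1-2s}$; no "fractional interpolation" with the $L^2$ bound upgrades this to $\|u^k_\tau\|^2_{\dot H^{1-s}}$, since the weight $m(u^k_\tau)$ can degenerate (up to $\tau^{\alpha/10}$) where $u^k_\tau$ vanishes. The paper instead applies flow interchange with the entropy $\mathbf{U}$ built from $U''=1/m$ and the heat semigroup $\mathbf{P}_h$ (which is the $\mathbf{W}_m$-gradient flow of $\mathbf{U}$ precisely because $U''(u)m(u)=1$), obtaining the clean dissipation $\tau\|u^k_\tau\|^2_{\dot H^{1-s}}\le\mathbf{U}(u^{k-1}_\tau)-\mathbf{U}(u^k_\tau)$ in Lemma \ref{L-the existence of solution}; this identity is the structural heart of the compactness argument and is absent from your plan. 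Two smaller points: in your $L^p$ recursion the exponent should be $\alpha+p$ rather than $p+\alpha-1$ (since $\cG(r)\gtrsim r^{(\alpha+p)/2}$ when $m(r)\sim r^\alpha$ and $g(r)=r^p$), and the auxiliary flow for the $L^p$ entropy is the nonlinear diffusion $\partial_t v=\Delta\mathbf{G}(v)+\delta\Delta v$ with $\mathbf{G}'=mg''$, not "a short heat flow".
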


 Let us describe our strategy to prove Theorem \ref{T-solutions of fractional equations}. First, we establish the regularity of our minimizers $u^k_\tau$. With suitable initial data $u_0$, our minimizers $u^k_\tau$ not only belongs to $\dot{H}^{-s}(\R^d)$ but also $\dot{H}^{1-s}(\R^d)$. Moreover, we can control $\|u^k_\tau\|_{\dot{H}^{1-s}(\R^d)}$ in terms of our potential functions. This task can be achieved as we have a flow interchange as in Lemma \ref{L-flows of modified equations} for our scheme. In order to get our flow interchange, we need to verify that the associated semigroup $S_\delta$ of our auxiliary function $V_\delta$ is a $\lambda$-flow under our weighted Wasserstein distance. To do this, we establish the Eulerian calculus for our weighted Wasserstein metric in Lemma \ref{L-Eulerian calculus}. Note that the flow interchange technique was introduced by McCann, Matthes and Savar\'{e} \cite{MMS}, and the Eulerian calculus for the usual Wasserstein metric was introduced by Otto and Westdickenberg \cite{OttoWest} and was developed later by Daneri and Savar\'{e} \cite{DS}. These ideas have been applied to study PDEs in \cite{LMS12, LMS18} as applications of JKO schemes related to the standard Wasserstein metric. Our achieved regularity estimates of $\{\bar{u}_{\tau_n}\}$ are good enough to obtain a weak solution $u$ of the equation \eqref{E-main 1} via a compactness argument as $\tau_n\to 0$.

The other important features in our Theorem \ref{T-solutions of fractional equations} are the decay rate at infinity of $L^p$ norms and the almost boundedness of solutions $u$ and $u_\tau$ that we stated in  iii) and iv), respectively. The decay rate of $L^p$ norms was already proved in \cite{CSV} and later in \cite{LMS18} for the usual homogeneous porous medium equation with nonlocal pressure with $\alpha=1$. To get iii) and iv) of Theorem \ref{T-solutions of fractional equations}, first we establish $L^{\frac{2d}{d-2(1-s)}}$ estimates of $\cG(u^k_\tau)$ as in Lemma \ref{L-decay of scheme} for our minimizing movements scheme as follows
		\begin{equation}\label{F-Introduction Decay of entropies'}
	C(d,s)\tau||\cG(u^k_\tau)||_{L^{\frac{2d}{d-2(1-s)}}}^2\leq  \int_{\mathbb{R}^d} g(u^{k-1}_\tau(x)) dx-\int_{\mathbb{R}^d} g(u^{k}_\tau(x)) dx, 
	\end{equation}	
	where $\cG(r)=\int_{0}^{r}\sqrt{m(z)g''(z) } dz$ with $g\in C^{2}([0,\infty),\mathbb{R}_+)$ is convex such that $g(0)=g'(0)=g''(0)=0$.
This estimate is obtained by applying the flow interchange technique. Then for the case $p<+\infty$, combining \eqref{F-Introduction Decay of entropies'} for the function $g(z)=z^p$ with interpolation inequalities we get our estimates of the $L^p$-norms. On the other hand, applying \eqref{F-Introduction Decay of entropies'} for the function $g(z)=(z-\Lambda)_+^p$, we will get v) of Theorem \ref{T-solutions of fractional equations}. Then the almost boundedness of solutions $u$ and $u_\tau$ is a consequence of v) by the following argument. 

 Letting $\tau\to 0$, from equation \eqref{Z25'} we obtain 
	\begin{equation}
\lim\limits_{\tau \to 0}\sup_{t\geq 1}||(\overline{u}_\tau(t)-\Lambda_0)_+||_{L^2}^2+\int_{1}^{\infty}||(\overline{u}_\tau(t)-\Lambda_0)_+||_{L^{\frac{(\alpha+2)d}{d-2(1-s)}}}^{\alpha+2} dt =0,
\end{equation}
which implies
\begin{equation}
\sup_{t\geq 1}||(u(t)-\Lambda_0)_+||_{L^2}^2=0.
\end{equation}
That is, $u(t) \leq \Lambda_0$ for all $t\geq 1$.

Let us denote
\begin{equation*}
A_\tau=\{x: (\overline{u}_\tau(t)-\Lambda_0)_+\geq 1, \forall t\geq 1\}.
\end{equation*}
Then, also from equation \eqref{Z25'},
\begin{equation*}
|A_\tau|\leq C \tau^{\frac{d(\alpha+1)}{2(1-s)}+2}.
\end{equation*}
  So, 
\begin{equation}
(\overline{u}_\tau(t)-\Lambda_0)_+\leq 1, i.e. \mbox{ }  {u}_\tau(t)\leq \Lambda_0+1
\end{equation}
 for any $t\geq 1$, and $x\in \mathbb{R}^d\backslash A_\tau$.

 Our second main result is the existence of weak solutions of equation \eqref{E-main 2} 
\begin{theorem}
\label{Z16} Let $u_0\in \mathcal{P}(\mathbb{R}^d)\cap( L^2\cap \dot{H}^{-s})(\mathbb{R}^d)$,  $0< s<\min\{1,d/2\}$. Let $\{\tau_n\}$ be a positive sequence converging to 0, $\overline{u}_\tau$ be the solution of the scheme \eqref{E-scheme}- \eqref{Z12} with $m_n(r)=(r+1)^{\tau_n^{\frac{1}{10}}}$ and $u:[0,+\infty)\to \cP(\R^d)$ as in Lemma \ref{L-the existence of solution}. Then, there exists a subsequence  of $u_n$ still denoted by $u_n$ converging to a solution $u$ of
	\begin{equation}\label{Z18}
	\partial_t u+(-\Delta)^{1-s} u=0.
	\end{equation}
\end{theorem}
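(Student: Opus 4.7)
The weight $m_n(r)=(r+1)^{\tau_n^{1/10}}$ satisfies $m_n(r)\to 1$ uniformly on bounded sets as $\tau_n\to 0$, since $\tau_n^{1/10}\to 0$. The formal gradient flow of $F(u)=\tfrac{1}{2}\|u\|^2_{\dot{H}^{-s}}$ in the modified Wasserstein geometry $\mathbf{W}_m$ is $\partial_t u=\operatorname{div}(m(u)\nabla(-\Delta)^{-s}u)$; specialising to $m\equiv 1$ gives $\partial_t u=\Delta(-\Delta)^{-s}u=-(-\Delta)^{1-s}u$, which is exactly \eqref{Z18}. My plan is therefore to derive a discrete weak formulation from the scheme \eqref{E-scheme} mirroring the argument of Theorem \ref{T-solutions of fractional equations}(ii), and to pass to the limit $\tau_n\to 0$ exploiting both the collapse $m_n\to 1$ and the compactness already furnished by Lemma \ref{L-the existence of solution}.

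\textbf{Discrete Euler--Lagrange.} For $\phi\in C_c^\infty(\R^d)$, I perturb the minimizer $u^k_{\tau_n}$ along the weighted continuity equation $\partial_s u_s+\operatorname{div}(m_n(u_s)\nabla\phi)=0$, using the Eulerian calculus of Lemma \ref{L-Eulerian calculus}. The variation of $\tfrac{1}{2\tau_n}\mathbf{W}_{m_n}^2(\cdot,u^{k-1}_{\tau_n})$ at $u^k_{\tau_n}$ supplies $\int_{\R^d}\phi\,(u^k_{\tau_n}-u^{k-1}_{\tau_n})/\tau_n\,dx$ up to a controllable remainder, while the variation of $F$ supplies $\int_{\R^d} m_n(u^k_{\tau_n})\nabla\phi\cdot\nabla(-\Delta)^{-s}u^k_{\tau_n}\,dx$. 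Multiplying by $\tau_n$, summing in $k$ and pairing against $\psi\in C_c^\infty((0,\infty)\times\R^d)$ yields
\begin{equation*}
\int_0^\infty\!\!\int_{\R^d}\bar u_{\tau_n}(t,x)\,\frac{\psi(t-\tau_n,x)-\psi(t,x)}{\tau_n}\,dx\,dt \;=\; \int_0^\infty\!\!\int_{\R^d} m_n(\bar u_{\tau_n})\,\nabla_x\psi\cdot\nabla(-\Delta)^{-s}\bar u_{\tau_n}\,dx\,dt+o(1).
\end{equation*}

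\textbf{Passage to the limit.} The left-hand side converges to $-\int_0^\infty\!\int_{\R^d} u\,\partial_t\psi\,dx\,dt$ by the strong $L^2(0,T;\dot{H}^\beta)$ convergence $\bar u_{\tau_n}\to u$ supplied by Lemma \ref{L-the existence of solution}. On the right, the uniform $L^2(0,T;\dot{H}^{1-s})$ bound gives weak convergence $\nabla(-\Delta)^{-s}\bar u_{\tau_n}\rightharpoonup\nabla(-\Delta)^{-s}u$ in $L^2_{t,x}$, so it suffices to prove $m_n(\bar u_{\tau_n})\to 1$ strongly in $L^2$ on $\supp\psi$. From the elementary inequalities $|m_n(r)-1|\le \tau_n^{1/10}\log(1+r)\,m_n(r)$ and $m_n(r)\le 1+r$ (valid once $\tau_n^{1/10}\le 1$), together with the analogues of the $L^p$-decay bounds obtained by re-running Lemma \ref{L-decay of scheme} with the weight $m_n$, a truncation $\{\bar u_{\tau_n}\le R\}$ versus $\{\bar u_{\tau_n}>R\}$ with $R=R(\tau_n)\to\infty$ chosen so that $\tau_n^{1/10}\log(1+R)\to 0$ delivers the claim. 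Integrating by parts in $x$ and using self-adjointness of $(-\Delta)^{-s}$ then gives
\begin{equation*}
\int_0^\infty\!\!\int_{\R^d} u\bigl(\partial_t\psi-(-\Delta)^{1-s}\psi\bigr)\,dx\,dt=0,
\end{equation*}
which is the weak form of \eqref{Z18}.

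\textbf{Main obstacle.} The crux is the strong convergence $m_n(\bar u_{\tau_n})\to 1$: $\bar u_{\tau_n}$ need not be uniformly bounded in $L^\infty$, so the prefactor $\tau_n^{1/10}$ must beat the possibly divergent tail of $\log(1+\bar u_{\tau_n})$. The exponent $\tau_n^{1/10}$ is conservative precisely so that this truncation closes; the bulk of the remaining technical work is verifying that the $L^p$-decay and flow--interchange estimates underlying Theorem \ref{T-solutions of fractional equations} remain uniform in $n$ for the specific sequence of weights $m_n$.
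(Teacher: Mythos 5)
Your overall architecture matches the paper's: the discrete Euler--Lagrange inequality is obtained from the flow interchange of Lemma \ref{L-flows of modified equations} applied to $\mathbf{V}_{\delta_n}$ (with $\delta_n=\tau_n^{1/2}$), and the limit passage uses the compactness furnished by Lemma \ref{L-the existence of solution}. The divergence --- and the gap --- is in how you dispose of the weight. You set out to prove $m_n(\bar u_{\tau_n})\to 1$ \emph{strongly in $L^2$} via a truncation at a level $R(\tau_n)\to\infty$ combined with ``re-running'' the $L^p$-decay estimates of Lemma \ref{L-decay of scheme} for the weight $m_n(r)=(r+1)^{\tau_n^{1/10}}$. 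Those decay estimates are established only for $m(r)=(r+\tau^{1/10})^\alpha$; the lower bound $\cG(r)\gtrsim r^{(\alpha+p)/2}$ driving them degenerates as the exponent $\tau_n^{1/10}\to 0$, and Lemma \ref{L-p norm decay} additionally requires $d\ge 2$ and $\|u_0\|_{L^\infty}<\infty$, neither of which is assumed in Theorem \ref{Z16}. Without some uniform-in-$n$ higher integrability, the contribution of the set $\{\bar u_{\tau_n}>R\}$ to $\|m_n(\bar u_{\tau_n})-1\|_{L^2(\supp\psi)}$ does not visibly vanish (the available bound $\mathbf{U}(\bar u_{\tau_n}(t))\le\mathbf{U}(u_0)$ only controls roughly an $L^{2-\tau_n^{1/10}}$ quantity), so as written your key step does not close.

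The paper's resolution is much more elementary and requires neither truncation nor strong $L^2$ convergence of the weight: one only needs the \emph{product} $(m_n(\bar u_{\tau_n})-1)\,\nabla(-\Delta)^{-s}\bar u_{\tau_n}$ to vanish in $L^1_{loc}$. From $0\le (a+1)^{\tau_n^{1/10}}-1\le \tau_n^{1/10}(a+1)^{\tau_n^{1/10}}\log(a+1)\le C\,\tau_n^{1/10}(a+1)$ (the last inequality holding uniformly for $\tau_n^{1/10}\le 1/2$, since $x^{\epsilon}\log x\le Cx$ for $x\ge 1$), the error term is bounded by
\begin{equation*}
C\,\tau_n^{1/10}\int_0^T\!\!\int_{\supp\varphi}(\bar u_{\tau_n}+1)\,\big|\nabla(-\Delta)^{-s}\bar u_{\tau_n}\big|\,|\nabla\varphi|\,dx\,dt,
\end{equation*}
and after Young's inequality the two resulting factors are controlled uniformly in $n$ by the unit mass of $\bar u_{\tau_n}(t)$ on the compact set $\supp\varphi$ and by the energy bound $\int_0^T\|\bar u_{\tau_n}(t)\|^2_{\dot H^{1-s}}\,dt\le\mathbf{U}(u_0)$ of \eqref{Z10}. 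Hence the whole remainder is $O(\tau_n^{1/10})$ and no $L^p$-decay or $L^\infty$ information is needed. If you replace your truncation step by this pointwise bound, the rest of your argument goes through.
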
 

Our paper is organized as follows. In section 2, we review the definition of the modified Wasserstein distance and provide an its approximation. In section 3, we prove the existence and unique of minimizers for solutions of our scheme and then we establish our flow interchange for the scheme via the Eulerian calculus of the modified Wasserstein distances. Finally, in section 4 we will prove our main results.  
\vspace{0.5cm}\\
	\textbf{Acknowledgements:} N-P. Chung is partially supported by the National Research Foundation of Korea (NRF) grants funded by the Korea government No. NRF- 2016R1A5A1008055 and No. NRF-2019R1C1C1007107.  Q-H. Nguyen  is supported by the Academy of Mathematics and Systems Science, Chinese Academy of Sciences startup fund, and the National Natural Science Foundation of China (No. 12050410257 and No. 12288201) and  the National Key R$\&$D Program of China under grant
	2021YFA1000800. We thank Linh Nguyen for useful discussions.  
 \section{Preliminaries}
 First, we review fractional Sobolev spaces. The Fourier transform of $f\in L^1(\R^d)$ is defined by $\hat{f}(\xi):=\int_{\R^d}e^{-ix\cdot\xi} f(x)dx$.
We denote by $\cS(\R^d)$ the Schwartz space of smooth functions on $\R^d$ with rapid decay at infinity, and by $\cS'(\R^d)$ its dual space. For every $r\in \R$, the fractional Sobolev space $H^r(\R^d)$ and the homogeneous fractional Sobolev space $\dot{H}^r(\R^d)$ are defined respectively by
$$ H^r(\R^d):=\bigg\{f\in \cS'(\R^d):\hat{f}\in L^1_{loc}(\R^d), \|f\|_{H^r(\R^d)}:=\frac{1}{(2\pi)^d}\int_{\R^d}(1+|\xi|^{2})^r|\hat{f}(\xi)|^2d\xi<+\infty\bigg\},$$
$$\dot{H}^r(\R^d):=\bigg\{f\in \cS'(\R^d):\hat{f}\in L^1_{loc}(\R^d), \|f\|_{\dot{H}^r(\R^d)}:=\frac{1}{(2\pi)^d}\int_{\R^d}|\xi|^{2r}|\hat{f}(\xi)|^2d\xi<+\infty\bigg\}.$$
If $r<\frac{d}{2}$ then the space $\dot{H}^r(\R^d)$ is a Hilbert space with the scalar product 
$$\langle v,w\rangle_r:=\frac{1}{(2\pi)^d}\int_{\R^d}|\xi|^{2r}\hat{v}(\xi)\overline{\hat{w}(\xi)}d\xi.$$
From \cite[Proposition 1.37]{BCD} we can imply that if $r\in (0,1)$ then there exists a constant $C_{d,r}>0$ such that
\begin{align}
\label{F-scalar prodcut}
\langle v,w\rangle_r=C_{d,r}\int_{\R^d}\int_{\R^d}\frac{(v(x)-v(y))(w(x)-w(y))}{|x-y|^{d+2r}}, \mbox{  }\forall u,v\in \dot{H}^r(\R^d). 
\end{align}
If $r\in (0,d/2)$ then by \cite[Theorem 1.38]{BCD} there exists $C_{d,r}>0$ such that following Sobolev inequality holds 
\begin{align}
\label{I-Sobolev embedding}
\|u\|_{L^q(\R^d)}\leq C_{d,r}\|u\|_{\dot{H}^r(\R^d)},
\end{align}
for every $u\in \dot{H}^r(\R^d)$, where $q:=\frac{2d}{d-2\beta}$.

Given $0<s<\min\{1,\dfrac{d}{2}\}$, the $s$-fractional Laplician $(-\Delta)^s$ on $\R^d$ is defined by means of Fourier transform as
$$(\widehat{(-\Delta)^s u})(x)=|x|^{2s}\widehat{u}(x), \mbox{ for every } x\in \R^d.$$

Now let us review the definition of the modified Wasserstein distances introduced in \cite{DNS} and studied in \cite{CLSS}. We denote by  $\Pn(\R^d)$ the space of probability Borel measures on $\R^d$ and $\Pn_2(\R^d)$ the space of $\mu\in  \Pn(\R^d)$ with finite second order moment. Let $\mathcal{L}^d$ be the Lebesgue measure on $\R^d$. 

The Wasserstein distance on $\cP_2(\R^d)$ is defined by
 	\begin{equation*}
 	\mathbf{W}_2(\mu^0,\mu^1)=\min_{\pi} \left\{\int_{\mathbb{R}^d\times \mathbb{R}^d}|x-y|^2 d\pi(x,y):\pi\in \Gamma(\mu^0,\mu^1) \right\}^{1/2},
 	\end{equation*}
where
\begin{equation*}
 	\Gamma(\mu^0,\mu^1)=\left\{\pi\in \mathcal{P}(\mathbb{R}^d\times\mathbb{R}^d): \pi(A\times\mathbb{R}^d)=\mu^0(A),\pi(\mathbb{R}^d\times A)=\mu^1(A)~\forall~\text{Borel}~A\subset \R^d\right\}.
 	\end{equation*}
 The Wasserstein distance also admits a dynamical system description as follows \cite{BB}:
 $$\W_2^2 (\mu_0,\mu_1) = \inf \bigg\{\int_0^1 \int_{\R^d} \rho_s(x) \,  |\vv_s(x)|^2 dxdt: \partial_s \rho_s + \mdiv (\rho_s \vv_s) =0,   \mu_0=\rho_0 \mathcal{L}^d, \mu_1=\rho_1 \mathcal{L}^d\bigg\}.$$

  Let $X$ be a  metric space, we denote by $\cM^+(X)$ the space of all finite nonnegative Borel measures on $X$. We denote by $\cM(X,\R^d)$ the space of $\R^d$-valued Borel measures on $X$ with finite total variation.
  
  For two nonnegative functions $f,g$, our notation $f\sim g$ means that there exist $C_1,C_2>0$ such that $C_1f\leq g\leq C_2f.$

 We denote by $\CE$ the set of $(\mu_t)_{t\in [0,1]}\subset \cM^+(\mathbb{R}^d)$ and $(\nu_s)_{s\in [0,1]}\subset \cM(\mathbb{R}^d,\R^d)$ such that 
 \begin{enumerate}
 	\item $t\mapsto \mu_t$ is weakly* continuous in $\cM(\mathbb{R}^d)$;
 	\item $t\mapsto \nu_t$ is Borel and $\int_0^1|\nu_t|(\mathbb{R}^d)dt<\infty$;
 	\item $(\mu_t,\nu_t)_{t\in [0,1]}$ is a distributional solution of the following continuity equation(CE)
 	$$\partial_t\mu_t+\nabla\cdot \nu_t=0 \mbox{ in } \R^d\times (0,1),$$
which means $$\int_0^1\int_{\R^d}\partial_t\varphi(x,t)d\mu_t(x)dt+\int_0^1\int_{\R^d}\nabla_x \varphi(x,t)\cdot d\nu_t(x)dt=0,$$ for every $\varphi\in C_c^1(\R^d\times (0,1)).$
 \end{enumerate}
 Given $\nu^0,\nu^1\in \cM^+(\mathbb{R}^d)$, we denote by $\CE(\nu^0\to \nu^1)$ the subset of $\CE$ such that $\mu_0=\nu^0,\mu_1=\nu^1.$
  
 Given $\varepsilon>0$ and $\alpha\in (0,1]$, the action density function $G:[0,\infty)\times \R^d\to [0,+\infty)$ is defined by $$G(s_1,s_2):=\frac{|s_2|^2}{m(s_1)} \mbox{ for every } (s_1,s_2)\in [0,\infty)\times \R^d,$$
 where 
 \begin{equation*}
 m(s)\sim (s+\varepsilon)^\alpha.
 \end{equation*}  
  Given measures $\mu\in \cM^+(\R^d)$ and $\nu\in \cM(\R^d,\R^d)$, we consider the Lebesgue decompositions $\mu=\rho \mathcal{L}^d+\mu^\perp$ and $\nu=w\mathcal{L}^d+\nu^\perp$. Here, $\rho\in L^1_+(\R^d, \mathcal{L}^d;\R)$, $w\in L^1(\R^d,\mathcal{L}^d;\R^d)$, and $\nu^\perp$ is the singular part of $\nu$ with respect to the Lebesgue measure $\mathcal{L}^d$ of $\R^d$. We define $\cG(\mu,\nu)$ as follows:  
 	\[ \cG(\mu,\nu):=\begin{cases} 
 	\int_{\R^d} G(\rho,\omega)dx  \mbox{ if }\nu^\perp=0, \\
 	+\infty  \mbox{ otherwise}; 
 	\end{cases}
 	\]
 
 Now we are ready to define the modified Wasserstein distance. Given $\mu^0,\mu^1\in \cM^+(\mathbb{R}^d)$ we define
 \begin{align*}
 \mathbf{W}_{m}(\mu^0,\mu^1):&=\inf\bigg\{\bigg(\int_0^1\cG(\mu_t,\nu_t)dt\bigg)^{1/2}:(\mu,\nu)\in \CE(\mu^0\to\mu^1)\bigg\}\\
 &=\inf\bigg\{\int_0^1(\cG(\mu_t,\nu_t))^{1/2}dt:(\mu,\nu)\in \CE(\mu^0\to\mu^1)\bigg\}.
 \end{align*}
 
 The equality in the above definition follows from \cite[Theorem 5.4]{DNS}. If the set $\CE(\mu^0\to\mu^1)$ is empty we put $\mathbf{W}_{m}(\mu^0,\mu^1)=+\infty$. 

The following lemma provides an approximation of the modified Wasserstein distance.
\begin{lemma} \label{apro} Let $m\in C^\infty (\mathbb{R}_+,\mathbb{R}_+)$ be such that $\inf_{x\in \mathbb{R}_+} m(x)>0$. Let $\mu^0,\mu^1\in \mathcal{P}(\mathbb{R}^d)\cap L^1(\mathbb{R}^d)$ be such that $\mathbf{W}_{m}(\mu^0,\mu^1)<\infty$. Then there exist $\rho_n\in C^\infty_c([0,1]\times\mathbb{R}^d)$ and $\phi_n\in C^\infty([0,1]\times \mathbb{R}^d)\cap L^\infty([0,1],H^2(\mathbb{R}^d))$ such that 
	\begin{itemize}
		\item $\rho_n(t)\in \mathcal{P}(\mathbb{R}^d)$ for every $t\in [0,1]$, 
		 $||\rho_n(0)-\mu^0||_{L^1(\mathbb{R}^d)}+||\rho_n(1)-\mu^1||_{L^1(\mathbb{R}^d)}\to 0$ as $n\to \infty,$
		\item $(\rho_n,\phi_n)$ satisfies $\partial_t\rho_{n}(t,x)=-\operatorname{div}(m(\rho_n(t,x))\nabla\phi_n(t,x))$ and 
		\begin{equation}
	\mathbf{W}_{m}^2(\mu^0,\mu^1)=	\lim_{n\to +\infty}\int_{0}^{1}\int_{\mathbb{R}^d}m(\rho_n(t,x))|\nabla_x \phi_n(t,x)|^2 dxdt.
		\end{equation}
	\end{itemize}  
\end{lemma}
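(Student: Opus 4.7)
The plan is to build $(\rho_n,\phi_n)$ in three moves: take a near-optimizer of the Benamou--Brenier-type functional defining $\mathbf{W}_m$, smooth it by a space-time mollification, and then recover the velocity as a gradient by solving a linear elliptic problem on each time slice. The uniform lower bound $\inf m>0$ is the hypothesis that makes this classical strategy go through despite $m$ being an arbitrary smooth positive function.

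First I would fix $\epsilon>0$ and, by the definition of $\mathbf{W}_m$, pick $(\mu_t,\nu_t)\in\CE(\mu^0\to\mu^1)$ with $\int_0^1\cG(\mu_t,\nu_t)\,dt\leq \mathbf{W}_m^2(\mu^0,\mu^1)+\epsilon$. Finiteness of the action forces $\nu_t=w_t\mathcal{L}^d$; one can further arrange $\mu_t=\rho_t\mathcal{L}^d$ by adding, if needed, a tiny smooth reference density that interpolates linearly from $\mu^0$ to $\mu^1$ (the extra cost is controlled because $\inf m>0$). I would then extend $(\rho,w)$ from $(0,1)$ to all $t\in\R$ by $(\mu^0,0)$ for $t<0$ and $(\mu^1,0)$ for $t>1$, keeping the continuity equation in the distributional sense. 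Mollifying both $\rho$ and $w$ with a standard kernel of scale $\delta$ in $(t,x)$ yields a smooth pair $(\rho^\delta,w^\delta)$ that still obeys $\partial_t\rho^\delta+\mdiv w^\delta=0$ and has $\rho^\delta(t,\cdot)\in\cP(\R^d)$.

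Next, since $m(\rho^\delta)\geq\inf m>0$ and $\partial_t\rho^\delta$ has zero spatial mean, for each $t\in[0,1]$ the linear elliptic problem
\begin{equation*}
-\mdiv\bigl(m(\rho^\delta(t,\cdot))\,\nabla\phi^\delta(t,\cdot)\bigr)=\partial_t\rho^\delta(t,\cdot)
\end{equation*}
has a unique (modulo constants) solution $\phi^\delta(t,\cdot)\in\dot{H}^1(\R^d)$ by Lax--Milgram; elliptic regularity together with the smoothness of $\rho^\delta$ then delivers $\phi^\delta\in C^\infty([0,1]\times\R^d)\cap L^\infty([0,1];H^2(\R^d))$. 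Because $\nabla\phi^\delta$ is the $L^2(m(\rho^\delta)\,dx)$-orthogonal projection onto gradient fields among velocity fields with prescribed divergence,
\begin{equation*}
\int_0^1\!\!\int_{\R^d} m(\rho^\delta)\,|\nabla\phi^\delta|^2\,dxdt\leq \int_0^1\!\!\int_{\R^d} \frac{|w^\delta|^2}{m(\rho^\delta)}\,dxdt.
\end{equation*}
Endpoint convergence $\rho^\delta(0,\cdot)\to\mu^0$ and $\rho^\delta(1,\cdot)\to\mu^1$ in $L^1(\R^d)$ is the usual mollifier statement; a spatial cutoff $\chi_R(x)$ followed by renormalization and a second elliptic solve converts $\rho^\delta$ into something compactly supported in $x$, with a perturbation of the action that vanishes as $R\to\infty$. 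A diagonal subsequence in $(\epsilon,\delta,R)$ produces the desired $(\rho_n,\phi_n)$.

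The hard part will be the action convergence. Since $m$ is only assumed positive and smooth, the density $G(\rho,w)=|w|^2/m(\rho)$ is not jointly convex in $(\rho,w)$, so Jensen's inequality applied to mollifications does not by itself yield monotonicity of the action. The rescue is quantitative: the uniform bound $\inf m>0$ ensures $w\in L^2([0,1]\times\R^d)$ and reduces
\begin{equation*}
\int_0^1\!\!\int_{\R^d}\frac{|w^\delta|^2}{m(\rho^\delta)}\,dxdt\;\longrightarrow\;\int_0^1\!\!\int_{\R^d}\frac{|w|^2}{m(\rho)}\,dxdt
\end{equation*}
to a dominated-convergence statement, once one has $w^\delta\to w$ in $L^2$ and $\rho^\delta\to \rho$ a.e. Combined with the lower semicontinuity of $\cG$ along the Benamou--Brenier formulation (which controls the liminf from below by $\mathbf{W}_m^2$), this pins down the limit of the action to be exactly $\mathbf{W}_m^2(\mu^0,\mu^1)$.
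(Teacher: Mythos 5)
Your construction has the same skeleton as the paper's: reduce to smooth compactly supported data, solve the weighted elliptic problem $\operatorname{div}(m(\rho)\nabla\phi)=\operatorname{div}(\nu)$ on each time slice, and use the projection/H\"older identity $\int m(\rho)|\nabla\phi|^2=\int\nu\cdot\nabla\phi\le\big(\int|\nu|^2/m(\rho)\big)^{1/2}\big(\int m(\rho)|\nabla\phi|^2\big)^{1/2}$ to see that replacing $\nu$ by $m(\rho)\nabla\phi$ does not increase the action; since $(\rho,m(\rho)\nabla\phi)$ is still admissible, the action is squeezed between $\mathbf{W}_m^2$ and $\mathbf{W}_m^2+1/n$. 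The paper organizes the preliminary smoothing differently: it first replaces the endpoints by $C^\infty_c$ densities with $\mathbf{W}_m^2(\rho_n(0),\rho_n(1))\to\mathbf{W}_m^2(\mu^0,\mu^1)$ and then simply \emph{asserts} the existence of smooth, compactly supported $1/n$-minimizers for those endpoints, whereas you try to produce them by mollifying a near-optimal curve for the original endpoints. That extra step is exactly where your argument has a gap.

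Concretely, the claim ``the uniform bound $\inf m>0$ ensures $w\in L^2([0,1]\times\R^d)$'' has the inequality backwards: $m(\rho)\ge\inf m$ gives $|w|^2/m(\rho)\le|w|^2/\inf m$, so $\|w\|_{L^2}$ controls the action, not conversely. To deduce $w\in L^2$ from finiteness of $\int|w|^2/m(\rho)$ you would need $m(\rho)$ bounded \emph{above}, which fails for the mobilities actually used later ($m(r)=(r+\tau^{1/10})^\alpha$ is unbounded and $\rho$ need not be bounded). Without $w\in L^2$ your dominated-convergence step has no dominating function, so the convergence of $\int|w^\delta|^2/m(\rho^\delta)$ to $\int|w|^2/m(\rho)$ is unsupported, and with it the key bound $\limsup_\delta\cG(\rho^\delta,w^\delta)\le\mathbf{W}_m^2(\mu^0,\mu^1)+\epsilon$. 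The standard repair — and the one implicitly behind the paper's unproved assertion — is the joint convexity of $(s,w)\mapsto|w|^2/m(s)$, which holds precisely when $m$ is concave (true for every mobility used in the paper); Jensen's inequality then gives $\cG(\rho\ast\eta_\delta,w\ast\eta_\delta)\le\cG(\rho,w)$ directly, and only this one-sided bound is needed, not equality of the limiting actions. You correctly identified that convexity is the issue for a general smooth $m$ with $\inf m>0$, but the substitute you propose does not close it; as stated, the lemma's hypotheses are too weak for your argument (and arguably for the paper's as well — both really use concavity of $m$). The compact-support truncation and the removal of a possible singular part of $\mu_t$ are also only sketched, but those become routine once the convexity is in place.
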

\begin{proof} We can choose  $\rho_n(0),\rho_{n}(1)\in  \mathcal{P}(\mathbb{R}^d)\cap C^\infty_c(\mathbb{R}^d)$ such that $||\rho_n(0)-\mu^0||_{L^1(\mathbb{R}^d)}+||\rho_n(1)-\mu^1||_{L^1(\mathbb{R}^d)}\to 0$ as $n\to \infty,$ and 
$
		\mathbf{W}_{m}^2(\mu^0,\mu^1)=\lim\limits_{n\to \infty}	\mathbf{W}_{m}^2(\rho_{n}(0),\rho_{n}(1)).$\\
	For every $n\in \N$, there exist $\rho_{n}\in C^\infty_c([0,1]\times \mathbb{R}^d)$ and $\nu_n\in C^\infty_c([0,1]\times \mathbb{R}^d, \mathbb{R}^d)$ such that $\partial_t\rho_n(t,x)=-\operatorname{div}(\nu_n(t,x))$ and 
	\begin{equation}\label{Z23}
\int_{0}^{1}\int_{\mathbb{R}^d}\frac{|\nu_n(t,x)|^2}{m(\rho_n(t,x))} dxdt\leq 	\mathbf{W}_{m}^2(\rho_{n}(0),\rho_{n}(1))+\frac{1}{n}.
	\end{equation}
	Let $\phi_{n}(t,x)$ be the smooth solution to $\operatorname{div}(m(\rho_n(t,x))\nabla\phi_n(t,x))=\operatorname{div}(\nu_n(t,x))$.  Using $\phi_{n}$ as test function of this equation to get,
	\begin{equation*}
	\int_{\mathbb{R}^d} m(\rho_n(t,x))|\nabla\phi_n(t,x)|^2dx=\int_{\mathbb{R}^d} \nu_n(t,x)\nabla \phi_{n}(t,x)dx.
	\end{equation*}
	By H\"{o}lder inequality, 
		\begin{equation*}
	\int_{\mathbb{R}^d} m(\rho_n(t,x))|\nabla\phi_n(t,x)|^2dx\leq \left(\int_{\mathbb{R}^d}\frac{|\nu_n(t,x)|^2}{m(\rho_n(t,x))} dx\right)^{1/2}\left(\int_{\mathbb{R}^d} m(\rho_n(t,x))|\nabla\phi_n(t,x)|^2dx\right)^{1/2}.
	\end{equation*}
Combining this with \eqref{Z23} and definition of $\mathbf{W}_{m}^2(\rho_{n}(0),\rho_{n}(1))$, one obtains 
	\begin{equation*}
\mathbf{W}_{m}^2(\rho_{n}(0),\rho_{n}(1))\leq \int_{0}^{1}	\int_{\mathbb{R}^d} m(\rho_n(t,x))|\nabla\phi_n(t,x)|^2dx dt\leq 	\mathbf{W}_{m}^2(\rho_{n}(0),\rho_{n}(1))+\frac{1}{n}.
	\end{equation*}
	Letting $n\to \infty$, we obtain the result. The proof is complete. 
\end{proof}\\ 
\section{JKO schemes, flow interchange and Eulerian calculus}

 We begin this section with the following lemma proving the uniqueness solution of the scheme (\ref{E-scheme}).
\begin{lemma}
	\label{L-solutions of the scheme}
	For every $\tau>0$, every $u_0\in  \cP_2(\R^d)\cap \dot{H}^{-s}(\R^d)$ and every $k\in \N$, the scheme (\ref{E-scheme}) has a
	unique solution $u^k_\tau$.
\end{lemma}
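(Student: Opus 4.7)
The plan is the direct method of the calculus of variations, followed by a standard convexity argument for uniqueness. Denote the functional
$$F(u) := \frac{1}{2\tau}\mathbf{W}_m^2(u, u^{k-1}_\tau) + \frac{1}{2}\|u\|^2_{\dot{H}^{-s}(\R^d)}.$$
For existence, first observe that $F \geq 0$ and $F(u^{k-1}_\tau) = \tfrac{1}{2}\|u^{k-1}_\tau\|^2_{\dot{H}^{-s}} < \infty$, so $I := \inf_{\cP(\R^d)} F$ is finite. Take a minimizing sequence $\{u_n\} \subset \cP(\R^d)$; then $\mathbf{W}_m(u_n, u^{k-1}_\tau)$ and $\|u_n\|_{\dot{H}^{-s}}$ are uniformly bounded. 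Using the bound on the action along nearly optimal admissible curves $(\mu^n_t, \nu^n_t) \in \CE(u^{k-1}_\tau \to u_n)$ (from the dynamical formulation) together with the total mass constraint, I would extract tightness of $\{u_n\}$; hence, up to a subsequence, $u_n$ converges narrowly to some $u^\star \in \cP(\R^d)$ and weakly to $u^\star$ in $\dot{H}^{-s}(\R^d)$.

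Next, I would pass to the limit. The square Hilbert norm is weakly lower semi-continuous, so $\|u^\star\|^2_{\dot{H}^{-s}} \leq \liminf \|u_n\|^2_{\dot{H}^{-s}}$. For the Wasserstein term, the uniform bound on $\int_0^1 \cG(\mu^n_t, \nu^n_t)\,dt$ gives (after extraction) a limit curve $(\mu_t, \nu_t) \in \CE(u^{k-1}_\tau \to u^\star)$ with
$$\int_0^1 \cG(\mu_t, \nu_t)\,dt \leq \liminf_{n\to\infty} \int_0^1 \cG(\mu^n_t, \nu^n_t)\,dt,$$
using the joint lower semi-continuity of the action functional $\cG$ as in \cite{DNS}. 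This yields $\mathbf{W}_m^2(u^\star, u^{k-1}_\tau) \leq \liminf \mathbf{W}_m^2(u_n, u^{k-1}_\tau)$, and altogether $F(u^\star) \leq I$, so $u^\star$ is a minimizer.

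For uniqueness, suppose $u^{(1)}, u^{(2)}$ are both minimizers and consider $u^m := (u^{(1)} + u^{(2)})/2 \in \cP(\R^d)$. Since the Hilbert norm squared is strictly convex,
$$\|u^m\|^2_{\dot{H}^{-s}} \leq \tfrac{1}{2}\bigl(\|u^{(1)}\|^2_{\dot{H}^{-s}} + \|u^{(2)}\|^2_{\dot{H}^{-s}}\bigr),$$
with strict inequality whenever $u^{(1)} \neq u^{(2)}$ as elements of $\dot{H}^{-s}$, i.e.\ as measures. By the joint convexity of the action density $G(\rho,w) = |w|^2/m(\rho)$ (which follows from the perspective-function construction, in the regime where $m$ is concave), combined with the linearity of the continuity equation in the pair $(\mu,\nu)$, the map $\mu \mapsto \mathbf{W}_m^2(\mu, u^{k-1}_\tau)$ is convex along linear interpolations of measures:
$$\mathbf{W}_m^2(u^m, u^{k-1}_\tau) \leq \tfrac{1}{2}\bigl(\mathbf{W}_m^2(u^{(1)}, u^{k-1}_\tau) + \mathbf{W}_m^2(u^{(2)}, u^{k-1}_\tau)\bigr).$$
Adding gives $F(u^m) \leq I$ with strict inequality unless $u^{(1)} = u^{(2)}$, contradicting minimality.

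The main obstacle I anticipate is the joint narrow lower semi-continuity of the action in the compactness step: one must simultaneously extract convergence of densities and of fluxes in the correct topologies and identify the limit as a solution of the continuity equation, which relies on a careful application of the Benamou--Brenier-type framework from \cite{DNS}. A secondary delicate point is the convexity step in the uniqueness argument: for $m(r) = (r+\tau^{1/10})^\alpha$ with $\alpha > 1$, $m$ is convex rather than concave, so the perspective construction does not immediately apply and one may need to supplement with a displacement-convexity argument along $\mathbf{W}_m$-geodesics, or invoke the strict convexity of the $\dot{H}^{-s}$ part via a sharper comparison.
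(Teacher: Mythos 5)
Your proposal is correct and follows essentially the same route as the paper: existence by the direct method, using the weak* lower semicontinuity and weak* compactness of sublevels of $u\mapsto\mathbf{W}_m^2(u,u^{k-1}_\tau)$ established in \cite{DNS} and \cite{LMS18}, and uniqueness from the convexity of $\mathbf{W}_m^2(\cdot,u^{k-1}_\tau)$ (\cite[Theorem 5.11]{DNS}) combined with the strict convexity of $\frac{1}{2}\|\cdot\|^2_{\dot{H}^{-s}}$. Your concern about $\alpha>1$ does not arise in the paper's setting, since the action density is defined there only for $\alpha\in(0,1]$ (Section 2), so $m$ is concave and the joint convexity of $(\rho,w)\mapsto|w|^2/m(\rho)$ holds exactly as you describe.
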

\begin{proof}
By \cite[Theorem 5.5 and Theorem 5.6] {DNS}, and \cite[Proposition 3.1]{LMS18}, the function $u\mapsto 
\frac{1}{2\tau}W^2_m(u,u^{k-1}_\tau) + \frac{1}{2}\|u\|^2_{\dot{H}^{-s}(\R^d)}$
is nonnegative, lowersemicontinuous with respect to the weak* topology and has weak* compact sublevels. Therefore , the scheme \eqref{E-scheme} has minimizers. As $W_m^2(\cdot,\cdot )$ is convex \cite[Theorem 5.11]{DNS} and the map $u\mapsto \frac{1}{2}\|u\|^2_{\dot{H}^{-s}(\R^d)}$
is strictly convex we get
that the map $\frac{1}{2\tau}W^2_m(u,u^{k-1}_\tau) + \frac{1}{2}\|u\|^2_{\dot{H}^{-s}(\R^d)}$
is strictly convex and hence we obtain the
uniqueness of minimizers.
\end{proof}

 Let $U:[0,+\infty)\to [0,+\infty)$ be the function satisfying $U''(s):=\frac{1}{m(s)}$ with $U'(0)=U(0)=0$. We define 
$$\mathbf{U}(u):=\int_{\mathbb{R}^d} U(u(x))dx \mbox{ for every } u\in \mathcal{P}(\mathbb{R}^d).$$
Since $m(s)\sim (s+\varepsilon)^{\alpha}$,  one has $U(s)\sim \min\{\varepsilon^{-\alpha} s^2,s^{2-\alpha}\}$. Thus, 
\begin{equation}
\label{I-of U}
\mathbf{U}(u) \sim \int \min\{\varepsilon^{-\alpha}u(x)^2, u(x)^{2-\alpha}\} dx.
\end{equation}
Since $U$ is convex, the function $\mathbf{U}$ is lower semicontinuous with respect to the weak convergence on $\mathcal{P}(\mathbb{R}^d)$.

Let $\varphi\in C^\infty_c(\mathbb{R}^d)$ be a test function and let $\delta>0$. Let $\mathbf{S}_\delta$ be the semigroup defined by $\mathbf{S}_{\delta,t}v_0=v_t$ for every $t>0$ with $v_t$ is the unique distribution solution, i.e. $v_t\in \cP(\R^d)$, of the following equation with initial data $v_0\in \cP(\R^d)$ 
\begin{equation}
 		\label{E-solutions of approximations}
 		\partial_t v_t-\dive (m(v_t) \nabla\varphi)-\delta \Delta v_t=0 \mbox{ in } (0,+\infty)\times \mathbb{R}^d.
 		\end{equation}  
 Let $\rho\in C_c^\infty([0,1]\times \R^d)$ be such that $\rho(t)\in \cP(\R^d)$ for every $t\in [0,1]$. For every $h,t>0$ we put 
  $$\rho^h(t)=\mathbf{S}_{\delta,ht}\rho(t) \in \mathcal{P}(\mathbb{R}^d).$$
Let $\phi^h$ be the unique smooth solution to
\begin{equation}\label{z1}\partial_t\rho^h(t,x)=-\operatorname{div}(m(\rho^h(t,x))\nabla\phi^h(t,x)) \mbox{ in } [0,1]\times\mathbb{R}^d.
\end{equation}
We define 
\begin{equation*}
\mathbf{A}^h(t)=\int_{\mathbb{R}^d} m(\rho^h(t,x))|\nabla \phi^h(t,x)|^2 dx.
\end{equation*}

To prove that $S_\delta$ is a $\lambda$-flow in our modified Wasserstein distance we need the following lemma which is the Eulerian calculus in our metric. Its proof is an adaptation of the proof of \cite[formula 76]{LMS12}.
\begin{lemma}
\label{L-Eulerian calculus}
Let $m(r)=(r+\tau^{1/10})^\alpha$ or $m(r)=(r+1)^{\tau^{\frac{1}{10}}}$. Let $\mathbf{V}_{\delta}:\mathcal{P}(\mathbb{R}^d)\to \R$ be the function defined by
 		\begin{equation}\label{Z14}
 		\mathbf{V}_\delta(v):=\langle \varphi, v\rangle+\delta \mathbf{U}(v) \mbox { for every } v\in \mathcal{P}(\mathbb{R}^d).
 		\end{equation}
Then for every $t\in [0,1]$ and $h\geq 0$, we have  	 
\begin{equation}\label{interch}
\frac{1}{2}\partial_h \mathbf{A}^h(t) +\partial_t \mathbf{V}_\delta(\rho^h(t))\leq -\lambda_\delta  t \mathbf{A}^h(t),
\end{equation}
where \begin{equation*}
\lambda_{\delta}=-\frac{1}{2\delta}||\nabla\varphi||_{L^\infty}^2\sup_{a>0} (m(a)|m''(a)|)-||D^2\varphi||_{L^\infty} \sup_{a>0}|m'(a)|.
\end{equation*}
\end{lemma}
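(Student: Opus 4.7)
The plan is to compute $\partial_h\mathbf{A}^h(t)$ and $\partial_t\mathbf{V}_\delta(\rho^h(t))$ at fixed $(t,h)$, add them, and reduce \eqref{interch} to a single pointwise estimate handled by a Bochner identity and Young's inequality. Three identities will be used throughout: since $\rho^h(t)=\mathbf{S}_{\delta,ht}\rho(t)$, equation \eqref{E-solutions of approximations} gives
\[
\partial_h\rho^h(t,\cdot)=t\,L(\rho^h(t,\cdot)),\qquad L(v):=\mathrm{div}(m(v)\nabla\varphi)+\delta\Delta v;
\]
equation \eqref{z1} reads $\partial_t\rho^h=-\mathrm{div}(m(\rho^h)\nabla\phi^h)$; and by construction $U''(r)=1/m(r)$. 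Crucially, in both admissible cases $m(r)=(r+\tau^{1/10})^\alpha$ (with the standing restriction $\alpha\in(0,1]$ from Section~2) and $m(r)=(r+1)^{\tau^{1/10}}$ with $\tau<1$, one has $m''(r)\leq 0$; this sign condition is what eventually makes Young's inequality close.

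\textbf{Computing the two derivatives.} Differentiating $\mathbf{V}_\delta(\rho^h)=\int(\varphi\rho^h+\delta U(\rho^h))\,dx$ in $t$ and integrating by parts,
\[
\partial_t\mathbf{V}_\delta(\rho^h(t))=\int m(\rho^h)\nabla\varphi\cdot\nabla\phi^h\,dx+\delta\int\nabla\rho^h\cdot\nabla\phi^h\,dx=-\int\phi^h\,L(\rho^h)\,dx.
\]
Viewing $\mathbf{A}^h=\int m(\rho^h)|\nabla\phi^h|^2\,dx$ as a functional of the pair $(\rho^h,\partial_t\rho^h)$, with $\phi^h$ uniquely determined by the elliptic relation $\mathrm{div}(m(\rho^h)\nabla\phi^h)=-\partial_t\rho^h$, a short variational calculation gives the functional derivatives $\delta_\rho\mathbf{A}=-m'(\rho)|\nabla\phi|^2$ and $\delta_{\partial_t\rho}\mathbf{A}=2\phi$, so that
\[
\partial_h\mathbf{A}^h(t)=-\int m'(\rho^h)\partial_h\rho^h\,|\nabla\phi^h|^2\,dx+2\int\phi^h\,\partial_t\partial_h\rho^h\,dx.
\]
Substituting $\partial_h\rho^h=tL(\rho^h)$ and $\partial_t\partial_h\rho^h=L(\rho^h)+t\,\partial_tL(\rho^h)$ and adding the formula for $\partial_t\mathbf{V}_\delta$, the two contributions $\int\phi^h L(\rho^h)\,dx$ cancel and leave
\[
\tfrac{1}{2}\partial_h\mathbf{A}^h(t)+\partial_t\mathbf{V}_\delta(\rho^h(t))=t\left\{-\tfrac{1}{2}\int m'(\rho^h)L(\rho^h)|\nabla\phi^h|^2\,dx+\int\phi^h\,\partial_tL(\rho^h)\,dx\right\}=:tJ.
\]

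\textbf{Simplification and conclusion.} Next I expand $L(\rho^h)$ and $\partial_tL(\rho^h)$, substitute $\partial_t\rho^h=-\mathrm{div}(m(\rho^h)\nabla\phi^h)$, and integrate by parts. Applying the Bochner identity $\nabla\phi\cdot\nabla\Delta\phi=\tfrac{1}{2}\Delta|\nabla\phi|^2-|D^2\phi|^2$ together with $\Delta m(\rho)=m'(\rho)\Delta\rho+m''(\rho)|\nabla\rho|^2$ to the $\delta$-part, and the identity $\nabla\phi\cdot D^2\phi\nabla\varphi=\tfrac{1}{2}\nabla\varphi\cdot\nabla|\nabla\phi|^2$ to the first-order part, a specific sequence of two integrations by parts makes the $\Delta\rho^h|\nabla\phi^h|^2$-terms, the $(m'(\rho^h))^2\nabla\rho^h\cdot\nabla\varphi|\nabla\phi^h|^2$-terms and the $m(\rho^h)m'(\rho^h)\Delta\varphi|\nabla\phi^h|^2$-terms all cancel in pairs. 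The residue is
\[
J=K-\int m(\rho^h)m'(\rho^h)(D^2\varphi\,\nabla\phi^h)\cdot\nabla\phi^h\,dx-\delta\int m(\rho^h)|D^2\phi^h|^2\,dx,
\]
where $K=\int m(\rho^h)m''(\rho^h)\bigl[\tfrac{1}{2}|\nabla\phi^h|^2(\nabla\rho^h\cdot\nabla\varphi)-(\nabla\phi^h\cdot\nabla\rho^h)(\nabla\phi^h\cdot\nabla\varphi)\bigr]dx+\tfrac{\delta}{2}\int m''(\rho^h)|\nabla\rho^h|^2|\nabla\phi^h|^2\,dx$. The last term of $J$ is nonpositive and is discarded; the Hessian-of-$\varphi$ term is bounded in modulus by $\|D^2\varphi\|_{L^\infty}\sup_a|m'(a)|\cdot\mathbf{A}^h$. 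For $K$, the elementary spectral bound
\[
\bigl|\tfrac{1}{2}|v|^2(u\cdot w)-(v\cdot u)(v\cdot w)\bigr|\leq\tfrac{1}{2}|v|^2|u||w|\qquad(u,v,w\in\R^d)
\]
followed by Young's inequality with parameter $\varepsilon=2\delta/\|\nabla\varphi\|_{L^\infty}$ bounds the first integral in $K$ by $\tfrac{\delta}{2}\int|m''(\rho^h)||\nabla\rho^h|^2|\nabla\phi^h|^2\,dx+\tfrac{1}{8\delta}\|\nabla\varphi\|_{L^\infty}^2\sup_a(m(a)|m''(a)|)\cdot\mathbf{A}^h$. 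Since $m''\leq 0$, the second integral in $K$ equals $-\tfrac{\delta}{2}\int|m''(\rho^h)||\nabla\rho^h|^2|\nabla\phi^h|^2\,dx$ and absorbs the Young remainder exactly. Because $\tfrac{1}{8\delta}\leq\tfrac{1}{2\delta}$, the total is at most $-\lambda_\delta\mathbf{A}^h$, which gives \eqref{interch} after multiplication by $t$.

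\textbf{Main obstacle.} The delicate part is the bookkeeping of the simplification step: several integrations by parts must be performed in a particular order to expose the pairwise cancellation of every $\Delta\rho^h$- and $(m')^2\nabla\rho^h$-contribution, none of which are separately controllable in the absence of bounds on $|\nabla\rho^h|$. Once these cancellations are identified, the rest is a routine Young's inequality, but the sign condition $m''\leq 0$ is essential, since the absorption relies on the nonpositive reserve $\tfrac{\delta}{2}\int m''(\rho^h)|\nabla\rho^h|^2|\nabla\phi^h|^2$; this is precisely the adaptation of \cite[formula~76]{LMS12} from the linear mobility $h(u)=u$ to the nonlinear $m$.
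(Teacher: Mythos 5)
Your proposal is correct and follows essentially the same route as the paper: the same two derivative formulas for $\partial_h\mathbf{A}^h$ and $\partial_t\mathbf{V}_\delta$, the same cancellation of the $\int\phi^h L(\rho^h)$ contributions, the same residual expression (your $J$ coincides term-by-term with the paper's right-hand side after applying Bochner and the identity $\nabla\varphi\cdot\nabla(\tfrac12|\nabla\phi|^2)-\nabla\phi\cdot\nabla(\nabla\varphi\cdot\nabla\phi)=-(D^2\varphi\,\nabla\phi)\cdot\nabla\phi$), and the same Young absorption into the nonpositive $m''$ reserve. The only differences are cosmetic: you compress the intermediate integrations by parts and use a slightly sharper pointwise bound for the cross term (constant $\tfrac{1}{8\delta}$ versus the paper's $\tfrac{1}{2\delta}$), both of which are consistent with the stated $\lambda_\delta$.
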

\begin{proof}
First, we have 
\begin{equation}\label{Z6}
\partial_h \rho^h(t)=t(\partial_z \mathbf{S}_{\delta,z})_{z=ht}(\rho(t))=t\dive (m( \rho^h(t)) \nabla\varphi)+\delta t \Delta \rho^h(t).
\end{equation}
Hence, thanks to \eqref{z1}
\begin{align}
\partial_t\partial_h \rho^h(t)&=\nonumber
\delta \Delta \rho^h(t)+\dive (m( \rho^h(t)) \nabla\varphi)-t\dive \left[ \operatorname{div}(m(\rho_n^h(t))\nabla\phi^h(t))m'( \rho^h(t)) \nabla\varphi\right]\\&~~~-\delta t \Delta\left[ \operatorname{div}(m(\rho^h(t))\nabla\phi^h(t))\right].\label{Z7}
\end{align}
For every $t\in [0,1]$ and $h\geq 0$, we have
\begin{align*}
&\partial_t \mathbf{V}_\delta(\rho^h(t))=\langle \varphi, \partial_t\rho^h(t)\rangle+\delta\int_{\mathbb{R}^d} U'(\rho^h(t))\partial_t\rho^h(t)\\&~~~=- \langle \varphi, \operatorname{div}(m(\rho^h(t))\nabla\phi^h(t))\rangle-\delta\int_{\mathbb{R}^d} U'(\rho^h(t)) \operatorname{div}(m(\rho_n^h(t))\nabla\phi_n^h(t))
\\&~~~= \int_{\R^d}  m(\rho^h(t))\nabla \varphi \nabla\phi^h(t)+\delta\int_{\mathbb{R}^d} U^{''}(\rho^h(t)) m(\rho^h(t))\nabla\phi^h(t)\nabla \rho^h(t).
\end{align*}
Since $U^{''}(\rho^h_n(t)) m(\rho_n^h(t))=1$, we get
\begin{equation}\label{Z8}
\partial_t \mathbf{V}_\delta(\rho^h(t))= \int_{\R^d}  m(\rho^h(t))\nabla \varphi \nabla\phi^h(t)-\delta\int_{\mathbb{R}^d} \rho^h(t)\Delta\phi^h(t).
\end{equation}
\newline
On the other hand,
\begin{align}\nonumber
\frac{1}{2}\partial_h \mathbf{A}^h(t)&=-\frac{1}{2}\int_{\R^d} \partial_hm(\rho^h) |\nabla \phi^h|^2+\int_{\R^d} \partial_h\left( m(\rho^h) \nabla \phi^h\right) \nabla \phi^h\\&\overset{\eqref{z1}}=-\frac{1}{2}\int_{\R^d} \partial_h\rho^h m'(\rho^h) |\nabla \phi^h|^2+\int_{\R^d} (\partial_t\partial_h\rho^h) \phi^h. \label{Z1}
\end{align}
By \eqref{Z6}, we obtain
\begin{align}\nonumber
&-\frac{1}{2}\int_{\R^d} \partial_h\rho^h m'(\rho^h) |\nabla \phi^h|^2=-\frac{t}{2}\int_{\R^d}\left[\dive (m( \rho^h(t)) \nabla\varphi)+\delta  \Delta \rho^h(t)\right] m'(\rho^h) |\nabla \phi^h|^2\\
&\nonumber=\frac{t}{2} \int_{\R^d}\left[m( \rho^h(t)) \nabla\varphi+\delta \nabla \rho^h(t)\right]\nabla\left( m'(\rho^h) |\nabla \phi_{n}^h|^2\right)\\
&\nonumber=\frac{t\delta}{2}\int_{\R^d} m''(\rho^h) | \nabla \rho^h(t)|^2 |\nabla \phi^h|^2+\frac{t\delta}{2}\int_{\R^d}  m'(\rho^h) \nabla \rho^h(t)\nabla (|\nabla \phi^h|^2)\\&\nonumber+\frac{t}{2}\int_{\R^d} m( \rho^h(t))m''( \rho^h(t)) \nabla\varphi\nabla \rho^h  |\nabla \phi^h|^2+t\int_{\R^d} m( \rho^h(t))m'( \rho^h(t)) \nabla \varphi  \nabla (\frac{1}{2}|\nabla \phi^h|^2)
\\&\nonumber=\frac{t\delta}{2}\int_{\R^d} m''(\rho^h) | \nabla \rho^h(t)|^2 |\nabla \phi^h|^2-t\delta\int_{\R^d}  m(\rho^h) \Delta (\frac{1}{2}|\nabla \phi^h|^2)\\&+\frac{t}{2}\int_{\R^d} m( \rho^h(t))m''( \rho^h(t)) \nabla\varphi\nabla \rho^h  |\nabla \phi^h|^2+t\int_{\R^d} m( \rho^h(t))m'( \rho^h(t)) \nabla \varphi  \nabla (\frac{1}{2}|\nabla \phi^h|^2).\label{z2}
\end{align}
It follows from \eqref{Z7} that
\begin{align*}\nonumber
&\int_{\R^d} (\partial_t\partial_h\rho^h) \phi^h=
\delta\int_{\R^d}  \rho^h(t) \Delta \phi^h-\int_{\R^d} m( \rho^h(t)) \nabla\varphi\nabla \phi^h(t)\\&\nonumber
+t\int_{\R^d} \operatorname{div}(m(\rho^h(t))\nabla\phi^h(t)))m'( \rho^h(t)) \nabla\varphi\nabla \phi^h-\delta t \int_{\R^d} \operatorname{div}(m(\rho^h(t)\nabla\phi^h(t))) \Delta \phi^h\\&\nonumber
\overset{\eqref{Z8}}= -\partial_t \mathbf{V}_\delta(\rho^h(t))-t\int_{\R^d} m(\rho^h(t))\nabla\phi^h(t)\nabla \left[m'( \rho^h(t)) \nabla\varphi\nabla \phi^h\right]+\delta t \int_{\R^d} m(\rho^h(t))\nabla\phi^h(t) \nabla(\Delta \phi^h)
\\&\nonumber
= -\partial_t \mathbf{V}_\delta(\rho^h(t))-t\int_{\R^d} m(\rho^h(t))m'(\rho^h(t))\nabla\phi^h(t)\nabla (\nabla\varphi\nabla \phi^h)\\&-t\int_{\R^d} m(\rho^h(t))m''(\rho^h(t))(\nabla\phi^h(t)\nabla \rho^h) ( \nabla\varphi\nabla \phi^h)+\delta t \int_{\R^d} m(\rho^h(t))\nabla\phi^h(t) \nabla(\Delta \phi^h).
\end{align*} 
Combining this with \eqref{z2}, \eqref{Z1} and \eqref{Z8}, one finds 
\begin{align*}
&\frac{1}{2}\partial_h \mathbf{A}^h(t)+\partial_t \mathbf{V}_\delta(\rho^h(t))\\&=\frac{t\delta}{2}\int_{\R^d} m''(\rho^h) | \nabla \rho^h(t)|^2 |\nabla \phi^h|^2+t\delta\int_{\R^d}  m(\rho^h) \left[-\Delta (\frac{1}{2}|\nabla \phi^h|^2)+\nabla\phi^h(t) \nabla(\Delta \phi^h)\right]\\
&+\frac{t}{2}\int_{\R^d} m( \rho^h(t))m''( \rho^h(t)) \left(\nabla\varphi\nabla \rho^h  |\nabla \phi^h|^2-2(\nabla\phi^h(t)\nabla \rho^h) ( \nabla\varphi\nabla \phi^h)\right)\\
&+t\int_{\R^d} m( \rho^h(t))m'( \rho^h(t)) \left(\nabla \varphi \nabla (\frac{1}{2}|\nabla \phi^h|^2)-\nabla\phi^h(t)\nabla (\nabla\varphi\nabla \phi^h)\right).
\end{align*}
Since $m''\leq 0$ and 
\begin{align*}
&-\Delta (\frac{1}{2}|\nabla \phi^h|^2)+\nabla\phi^h(t) \nabla(\Delta \phi^h)=-|\nabla^2 \phi^h|^2 \mbox{ (the Bochner formula)},
\\&
\left|\nabla\varphi\nabla \rho^h  |\nabla \phi^h|^2-2(\nabla\phi^h(t)\nabla \rho^h) ( \nabla\varphi\nabla \phi^h)\right|\leq 2 |\nabla \varphi| |\nabla \phi^h|^2|\nabla \rho^h|,\\
& |\nabla \varphi \nabla (\frac{1}{2}|\nabla \phi^h|^2)-\nabla\phi^h(t)\nabla (\nabla\varphi\nabla \phi^h)|\leq |\nabla^2\varphi||\nabla\phi^h|^2, 
\end{align*}
one has 
\begin{align*}
&\frac{1}{2}\partial_h \mathbf{A}^h(t)+\partial_t \mathbf{V}_\delta(\rho^h(t))\leq -\frac{t\delta}{2}\int_{\R^d} |m''(\rho^h)| | \nabla \rho^h(t)|^2 |\nabla \phi^h|^2\\
&+t\int_{\R^d} \left(m( \rho^h(t))|m''( \rho^h(t))|^{1/2} |\nabla \varphi| |\nabla \phi^h|\right)\left( |m''( \rho^h(t))|^{1/2}  |\nabla \phi^h||\nabla \rho^h|\right)\\
&+t\int_{\R^d} m( \rho^h(t))|m'( \rho^h(t))|  |D^2\varphi||\nabla\phi^h|^2\\
&\leq 
\frac{t}{2\delta}\int_{\R^d} |m( \rho^h(t))|^2m''( \rho^h(t))| |\nabla \varphi|^2 |\nabla \phi^h|^2+t\int_{\R^d} m( \rho^h(t))|m'( \rho^h(t))|  |D^2\varphi||\nabla\phi^h|^2.
\end{align*}
Thus, we deduce that 
\begin{align*}
&\frac{1}{2}\partial_h \mathbf{A}^h(t)+\partial_t \mathbf{V}_\delta(\rho^h(t))\leq -\lambda_{\delta} t \mathbf{A}^h(t)
\end{align*}
with 
\begin{equation}
\lambda_{\delta}=-\frac{1}{2\delta}||\nabla\varphi||_{L^\infty}^2\sup_{a\geq 0} (m(a)|m''(a)|)-||D^2\varphi||_{L^\infty} \sup_{a\geq 0}|m'(a)|.
\end{equation}
The proof is complete. 
\end{proof}
	\vspace{0.5cm}\\
Now in the following lemma we are ready to establish our crucial flow interchange of our scheme via proving that $S_\delta$ is a $\lambda$-flow in the modified Wasserstein distance. 
     \begin{lemma}
 	\label{L-flows of modified equations}  Let $\varphi\in C^\infty_c(\mathbb{R}^d)$ be a test function. Let $m(r)=(r+\tau^{1/10})^\alpha$ or $m(r)=(r+1)^{\tau^{\frac{1}{10}}}$. Let $\delta>0$, we define the function $\mathbf{V}_{\delta}:\mathcal{P}(\mathbb{R}^d)\to \R$ by
 		\begin{equation}\label{Z14}
 		\mathbf{V}_\delta(v):=\langle \varphi, v\rangle+\delta \mathbf{U}(v) \mbox { for every } v\in \mathcal{P}(\mathbb{R}^d).
 		\end{equation}
 	 Then there holds 
 	 	\begin{equation}\label{Z13}
 	 \mathbf{V}_\delta(u^k_\tau)-\mathbf{V}_\delta(u^{k-1}_\tau)\leq  -\tau\delta||u^k_\tau||_{\dot{H}^{1-s}}^2+\tau\langle \dive \left(m(u^k_\tau)\nabla (-\Delta) ^{-s}u^k_\tau\right),\varphi \rangle-\frac{\lambda_\delta}{2}\mathbf{W}^2_{m}(u^k_\tau,u^{k-1}_\tau)
 	 \end{equation}
where 
\begin{equation*}
\lambda_{\delta}=-\frac{1}{2\delta}||\nabla\varphi||_{L^\infty}^2\sup_{a>0} (m(a)|m''(a)|)-||D^2\varphi||_{L^\infty} \sup_{a>0}|m'(a)|.
\end{equation*}
In particular, 
	\begin{equation}\label{Z9}
\mathbf{V}_\delta(u^k_\tau)-\mathbf{V}_\delta(u^{k-1}_\tau)\leq  \tau\langle \dive \left(m(u^k_\tau)\nabla (-\Delta) ^{-s}u^k_\tau\right),\varphi \rangle-\frac{\tau\lambda_\delta}{2}\left(||u^{k-1}_\tau||_{\dot{H}^{-s}}^2-||u^{k}_\tau||_{\dot{H}^{-s}}^2\right).
\end{equation}
 \end{lemma}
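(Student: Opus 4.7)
The plan is to run the flow-interchange strategy of Matthes, McCann and Savar\'e: perturb the JKO minimizer $u^k_\tau$ by the auxiliary semigroup $\mathbf{S}_\delta$ and couple the minimality of $u^k_\tau$ with the fact, provided by Lemma \ref{L-Eulerian calculus}, that $\mathbf{S}_\delta$ is a $\lambda_\delta$-EVI gradient flow of $\mathbf{V}_\delta$ with respect to $\mathbf{W}_m$.

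My first step is to upgrade the pointwise Eulerian inequality \eqref{interch} into the initial-time evolution variational inequality
$$\frac{d^+}{dh}\Big|_{h=0^+}\tfrac{1}{2}\mathbf{W}_m^2(\nu,\mathbf{S}_{\delta,h}\mu) + \mathbf{V}_\delta(\mu) - \mathbf{V}_\delta(\nu) \;\leq\; -\tfrac{\lambda_\delta}{2}\mathbf{W}_m^2(\nu,\mu),$$
valid for $\nu,\mu\in\mathcal{P}(\R^d)\cap L^1(\R^d)$ with $\mathbf{W}_m(\nu,\mu)<\infty$. To do this, I select the approximating smooth pairs $(\rho_n,\phi_n)$ connecting $\nu$ and $\mu$ furnished by Lemma \ref{apro}, set $\rho^h_n(t):=\mathbf{S}_{\delta,ht}\rho_n(t)$, and integrate \eqref{interch} successively in $t\in[0,1]$ and $h\in[0,h_0]$. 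Since the pair $(\rho_n^{h_0},m(\rho_n^{h_0})\nabla\phi_n^{h_0})$ belongs to $\CE(\rho_n(0)\to\mathbf{S}_{\delta,h_0}\rho_n(1))$, the admissibility bound $\mathbf{W}_m^2(\rho_n(0),\mathbf{S}_{\delta,h_0}\rho_n(1))\leq \int_0^1\mathbf{A}^{h_0}_n(t)\,dt$ together with $\int_0^1\mathbf{A}^0_n(t)\,dt\to \mathbf{W}_m^2(\nu,\mu)$ from Lemma \ref{apro}, passed to the limit $n\to\infty$ and then $h_0\to 0^+$, yields the EVI.

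The second step is to apply this EVI with $\nu=u^{k-1}_\tau$ and $\mu=u^k_\tau$, and to combine it with the minimality of $u^k_\tau$ tested against the admissible competitor $\mathbf{S}_{\delta,h}u^k_\tau$. The latter gives
$$0 \;\leq\; \tfrac{1}{2\tau}\tfrac{d^+}{dh}\mathbf{W}_m^2(\mathbf{S}_{\delta,h}u^k_\tau,u^{k-1}_\tau)\Big|_{h=0} + \tfrac{1}{2}\tfrac{d}{dh}\|\mathbf{S}_{\delta,h}u^k_\tau\|^2_{\dot{H}^{-s}}\Big|_{h=0}.$$
The $\dot{H}^{-s}$-derivative is evaluated via the defining PDE \eqref{E-solutions of approximations}: since $\partial_h\mathbf{S}_{\delta,h}u^k_\tau|_{h=0}=\dive(m(u^k_\tau)\nabla\varphi)+\delta\Delta u^k_\tau$, integration by parts together with the identity $(-\Delta)^{-s}\Delta=-(-\Delta)^{1-s}$ produces
$$\tfrac{1}{2}\tfrac{d}{dh}\|\mathbf{S}_{\delta,h}u^k_\tau\|^2_{\dot{H}^{-s}}\Big|_{h=0} \;=\; \bigl\langle \varphi,\,\dive(m(u^k_\tau)\nabla(-\Delta)^{-s}u^k_\tau)\bigr\rangle - \delta\|u^k_\tau\|^2_{\dot{H}^{1-s}}.$$
Substituting the EVI bound for the Wasserstein derivative and multiplying by $\tau$ gives exactly \eqref{Z13}.

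Finally, \eqref{Z9} follows from \eqref{Z13} by testing the minimality of $u^k_\tau$ against $u^{k-1}_\tau$ itself, which yields the classical a priori estimate $\mathbf{W}_m^2(u^k_\tau,u^{k-1}_\tau)\leq \tau\bigl(\|u^{k-1}_\tau\|^2_{\dot{H}^{-s}}-\|u^k_\tau\|^2_{\dot{H}^{-s}}\bigr)$; since $\lambda_\delta\leq 0$, this upper-bounds the Wasserstein term on the right of \eqref{Z13}, and dropping the nonpositive dissipation $-\tau\delta\|u^k_\tau\|^2_{\dot{H}^{1-s}}$ produces \eqref{Z9}. The principal technical obstacle lies in the first step: the Eulerian identity is available only along sufficiently smooth curves, so the whole argument hinges on controlling the defect $\int_0^1\mathbf{A}^0_n(t)\,dt - \mathbf{W}_m^2(\rho_n(0),\rho_n(1))$, which is merely asymptotically zero, while simultaneously extracting a meaningful right derivative in $h$ at $h=0^+$ from the doubly-integrated inequality and verifying continuity of $\mathbf{V}_\delta$ along the flow $\mathbf{S}_{\delta,h}\rho_n(1)$ uniformly in $n$.
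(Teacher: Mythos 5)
Your proposal follows essentially the same route as the paper: perturb the minimizer by the semigroup $\mathbf{S}_\delta$, compute the $\dot{H}^{-s}$ derivative from the generator of \eqref{E-solutions of approximations}, and convert the Eulerian inequality of Lemma \ref{L-Eulerian calculus} into an EVI along the approximating curves of Lemma \ref{apro}, then conclude \eqref{Z9} from minimality against $u^{k-1}_\tau$. The one point you gloss over is handled in the paper by the exponential weight $e^{2\lambda_\delta t h}$ together with the time reparametrization $\tilde{\theta}$: a naive double integration of \eqref{interch} leaves $-\lambda_\delta\int_0^1 t\,\mathbf{A}^0_n(t)\,dt$ rather than $-\tfrac{\lambda_\delta}{2}\int_0^1\mathbf{A}^0_n(t)\,dt$ on the right, so without that step (or a constant-speed normalization of $\rho_n$) you would only obtain the EVI with $\lambda_\delta$ in place of $\lambda_\delta/2$ --- harmless for the downstream estimates, but not literally \eqref{Z13}.
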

 \begin{proof} First, \eqref{Z9} follows from \eqref{Z13} and the fact that  
 		$$\frac{1}{2\tau} \mathbf{W}^2_{m}(u^k_\tau,u^{k-1}_\tau)+\frac{1}{2}||u^k_\tau||_{\dot{H}^{-s}}^2\leq \frac{1}{2}||u^{k-1}_\tau||_{\dot{H}^{-s}}^2.$$
 		We now prove \eqref{Z13}. We consider the equation 
 		\begin{equation}
 		\label{E-solutions of approximations'"}
 		\partial_t v_t-\dive (m(v_t) \nabla\varphi)-\delta \Delta v_t=0 \mbox{ in } (0,+\infty)\times \mathbb{R}^d.
 		\end{equation}
 		Clearly, the equation \eqref{E-solutions of approximations'"} has the uniqueness solution with initial data $v_0\in \mathcal{P}(\mathbb{R}^d)$. \\
 		We define the semigroup $\mathbf{S}_{\delta}$ by $\mathbf{S}_{\delta,t}v_0=v_t$ for every $t>0$.
 	By the definition of $u_\tau^k$ as a minimizer of \eqref{E-scheme}: 
 	$$\frac{1}{2\tau} \mathbf{W}^2_{m}(u^k_\tau,u^{k-1}_\tau)+\frac{1}{2}||u^k_\tau||_{\dot{H}^{-s}}^2\leq \frac{1}{2\tau} \mathbf{W}^2_{m}(\mathbf{S}_{\delta,h}(u^k_\tau),u^{k-1}_\tau)+\frac{1}{2}||\mathbf{S}_{\delta,h}(u^k_\tau)||_{\dot{H}^{-s}}^2,$$
 	and 
 		\begin{align*}
 	\lim_{h\to 0}\frac{||\mathbf{S}_{\delta,h} (u^k_\tau)||_{\dot{H}^{-s}}^2-||(u^k_\tau)||_{\dot{H}^{-s}}^2}{2h}&=\langle(-\Delta) ^{-s} u^k_\tau,(\partial_h\mathbf{S}_{\delta,h} (u^k_\tau))_{h=0}\rangle \\&\overset{\eqref{E-solutions of approximations'"}}= \langle(-\Delta) ^{-s} u^k_\tau,\dive (m(u^k_\tau) \nabla\varphi)+\delta \Delta u^k_\tau\rangle\\&
 	=-\delta||u^k_\tau||_{\dot{H}^{1-s}}^2+\langle \dive \left(m(u^k_\tau)\nabla (-\Delta) ^{-s}u^k_\tau\right),\varphi \rangle.
 	\end{align*}
 	So, it is enough to show that 
 \begin{equation}\label{Z2}
\frac{1}{2}\limsup_{h\to 0}\frac{\mathbf{W}^2_{m}(\mathbf{S}_{\delta,h}(\xi),\mu)^2-\mathbf{W}^2_{m}(\xi,\mu)^2}{h}+\frac{\lambda_\delta}{2}\mathbf{W}^2_{m}(\mu,\xi)^2+ \mathbf{V}_\delta(\xi)\leq  \mathbf{V}_\delta(\mu)
\end{equation} 
for any $\mu,\xi\in \mathcal{P}(\mathbb{R}^d)$  with $ \mathbf{V}_\delta(\mu),\mathbf{V}_\delta(\xi),\mathbf{W}^2_{m}(\mu,\xi)<\infty.$\\
Let $\rho_{n}$ be in Lemma \ref{apro} with $\mu^0=\mu,\mu^1=\xi$.
Set for $h>0, t>0$, $$\rho^h_n(t)=\mathbf{S}_{\delta,ht}v_0 \in \mathcal{P}(\mathbb{R}^d),$$
with $v_0=\rho_{n}(t).$\newline
\\Let $\phi_{n}^h$ be a unique solution to 
$$\partial_t\rho_{n}^h(t,x)=-\operatorname{div}(m(\rho_n^h(t,x))\nabla\phi_n^h(t,x)) \mbox{ in } [0,1]\times\mathbb{R}^d.$$
 Set 
\begin{equation*}
\mathbf{A}_n^h(t)=\int_{\mathbb{R}^d} m(\rho_n^h(t,x))|\nabla \phi_{n}^h(t,x)|^2 dx.
\end{equation*}
We have 
\begin{equation}\label{Z5}
\mathbf{W}^2_{m}(\mu,\xi)^2=\lim\limits_{n\to\infty}\int_{0}^{1} \mathbf{A}_n^0(t)dt.
\end{equation}
From Lemma \ref{L-Eulerian calculus} we get that
\begin{equation}\label{interch}
\frac{1}{2}\partial_h \mathbf{A}_n^h(t) +\partial_t \mathbf{V}_\delta(\rho^h_n(t))\leq -\lambda_\delta  t \mathbf{A}_n^h(t)
\end{equation}
for every $n\in \N,t\in [0,1]$ and $h\geq 0$. 
 
 Now, we show that \eqref{interch} implies \eqref{Z2}. Indeed, \eqref{interch} gives for $
\mathbf{c}=||\varphi||_{L^\infty}$
\begin{align*}
\frac{1}{2}\partial_h\int_{0}^{1}e^{2\lambda_{\delta}t h}\mathbf{A}_n^h(t) dt&\leq -\int_{0}^{1}e^{2\lambda_{\delta}t h}\partial_t( \mathbf{V}_\delta(\rho^h_n(t)) +\mathbf{c})dt\\&= \mathbf{V}_\delta(\rho_n(0))+\mathbf{c}- e^{2\lambda_{\delta}h}\left(\mathbf{V}_\delta(\rho^h_n(1))+\mathbf{c}\right)+2\lambda_{\delta} h\int_{0}^{1}e^{2\lambda_{\delta}t h} (\mathbf{V}_\delta(\rho^h_n(t))+\mathbf{c}) dt\\&\leq 
\mathbf{V}_\delta(\rho_n(0))+\mathbf{c}- e^{2\lambda_{\delta}h}\left(\mathbf{V}_\delta(\mathbf{S}_{\delta,h}\rho_{n}(1))+\mathbf{c}\right),
\end{align*}
here we have used the fact that  $\mathbf{V}_\delta(\rho^h_n(t))+\mathbf{c}\geq 0$ and $\lambda_\delta<0$. \\
So, 
\begin{equation}\label{Z3}
\frac{1}{2}\int_{0}^{1}e^{2\lambda_{\delta}t h}\mathbf{A}_n^h(t) dt\leq \frac{1}{2}\int_{0}^{1}\mathbf{A}_n^0(t) dt+
h(\mathbf{V}_\delta(\rho_n(0))+\mathbf{c})- \frac{1-e^{2\lambda_{\delta}h}}{-2\lambda_{\delta}}\left(\mathbf{V}_\delta(\mathbf{S}_{\delta,h}\rho_{n}(1))+\mathbf{c}\right),
\end{equation}
since the map $h\mapsto \mathbf{V}_\delta(\mathbf{S}_{\delta,h}\rho_{n}(1))+\mathbf{c}$ is decreasing. \\
For every positive function $\theta\in C^1([0,1])$, we have 
 $$\partial_t\rho_{n}^h(\tilde{\theta}^{-1}(t),x)=-\operatorname{div}(m(\rho_n^h(\tilde{\theta}^{-1}(t),x))\nabla ((\tilde{\theta}^{-1})'(t)\phi_n^h(\tilde{\theta}^{-1}(t),x))),$$
 and 
$$
 \rho_{n}^h(\tilde{\theta}^{-1}(0),x)=\rho_{n}^h(0,x),~~ \rho_{n}^h(\tilde{\theta}^{-1}(1),x)=\rho_{n}^h(1,x),
$$
with 
$$
\tilde{\theta}(t)=\left[\int_0^1 \frac{1}{\theta(z)}dz\right]^{-1} \int_{0}^{t}\frac{1}{\theta(z)}dz, \mbox{ and }\tilde{\theta}^{-1} \mbox{ is the inverse of } \tilde{\theta}.
$$
So, 
\begin{align*}
\mathbf{W}_m^2(\rho_{n}(0),\mathbf{S}_{\delta,h}\rho_{n}(1))&\leq \int_{0}^{1}\int_{\mathbb{R}^d}( (\tilde{\theta}^{-1})'(z))^2m(\rho_n^h(\tilde{\theta}^{-1}(z),x))|\nabla \phi_{n}^h(\tilde{\theta}^{-1}(z),x)|^2 dxdz\\
& =\int_{0}^{1}\int_{\mathbb{R}^d}(\tilde{\theta}^{-1})'(\tilde{\theta}(t)) m(\rho_n^h(t,x))|\nabla \phi_{n}^h(t,x)|^2 dxdt\\
&
=\int_0^1 \frac{1}{\theta(r)}dr \int_{0}^{1} \theta(t)\mathbf{A}_n^h(t)dt.
\end{align*}
Applying this to $\theta(t)=e^{2\lambda_{\delta}t h}$ to get  
\begin{equation}\label{Z4}
\mathbf{W}_m^2(\rho_{n}(0),\mathbf{S}_{\delta,h}\rho_{n}(1))\leq \frac{e^{-2\lambda_{\delta}h}-1}{-2\lambda_{\delta}h} \int_{0}^{1} e^{2\lambda_{\delta}t h}\mathbf{A}_n^h(t)dt.
\end{equation}
Combining \eqref{Z3} with \eqref{Z4} yield 
\begin{align*}
\frac{-\lambda_{\delta}h}{e^{-2\lambda_{\delta}h}-1}\mathbf{W}_m^2(\rho_{n}(0),\mathbf{S}_{\delta,h}\rho_{n}(1))\leq \frac{1}{2}\int_{0}^{1}\mathbf{A}_n^0(t) dt+
h(\mathbf{V}_\delta(\rho_n(0))+\mathbf{c})- \frac{1-e^{2\lambda_{\delta}h}}{-2\lambda_{\delta}}\left(\mathbf{V}_\delta(\mathbf{S}_{\delta,h}\rho_{n}(1))+\mathbf{c}\right).
\end{align*}
From \eqref{Z5}, lower semicontinuity of $\mathbf{W}^2_{m}$, continuity of $S_{\delta,h}$ and $\mathbf{V}_\delta$ under $\|\cdot\|_{L^1(\R^d)}$ , letting $n\to \infty$ we get 
\begin{align*}
\frac{-\lambda_{\delta}h}{e^{-2\lambda_{\delta}h}-1}\mathbf{W}_m^2(\mu,\mathbf{S}_{\delta,h}\xi)\leq \frac{1}{2}\mathbf{W}^2_{m}(\mu,\xi)^2+
h(\mathbf{V}_\delta(\mu)+\mathbf{c})- \frac{1-e^{2\lambda_{\delta}h}}{-2\lambda_{\delta}}\left(\mathbf{V}_\delta(\mathbf{S}_{\delta,h}\xi)+\mathbf{c}\right).
\end{align*}
So, 
\begin{align*}
\frac{-\lambda_{\delta}h}{e^{-2\lambda_{\delta}h}-1}\frac{\mathbf{W}_m^2(\mu,\mathbf{S}_{\delta,h}\xi)-\mathbf{W}_m^2(\mu,\xi)}{h}&+\frac{1}{h}\left[\frac{-\lambda_{\delta}h}{e^{-2\lambda_{\delta}h}-1}-\frac{1}{2}\right]\mathbf{W}_m^2(\mu,\xi)\\&\leq 
\mathbf{V}_\delta(\mu)+\mathbf{c}- \frac{1-e^{2\lambda_{\delta}h}}{-2\lambda_{\delta}h}\left(\mathbf{V}_\delta(\mathbf{S}_{\delta,h}\xi)+\mathbf{c}\right).
\end{align*}
Letting $h\to 0$, we obtain \eqref{Z2}. The proof is complete. 
 \end{proof}

\section{Proofs of the main results}
Now we are ready to prove our main results. 
 \begin{lemma}
\label{L-the existence of solution}
 Let $u_0\in \mathcal{P}(\mathbb{R}^d)\cap( L^2\cap \dot{H}^{-s})(\mathbb{R}^d)$. Let $m(r)=(r+\tau^{1/10})^\alpha$ or $m(r)=(r+1)^{\tau^{\frac{1}{10}}}$. Let $\tau>0$ and $\{u^k_\tau\}$ be the sequence and $\overline{u}_\tau$ be the solution of the scheme \eqref{E-scheme}- \eqref{Z12}. Then $u^k_\tau\in \dot{H}^{1-s}(\R^d)$
for every $k\geq 1$ and there holds,
 	\begin{equation}\label{Z11}
 \tau||u^k_\tau||_{\dot{H}^{1-s}}^2\leq 	\mathbf{U}(u^{k-1}_\tau)-\mathbf{U}(u^k_\tau).
 	\end{equation}
 	In particular, for any $k\in \mathbb{N}$, 
 	\begin{equation}\label{Z10}
 \mathbf{U}(u^{k}_\tau)+	\int_{0}^{k\tau}||\overline{u}_\tau(t)||_{\dot{H}^{1-s}}^2dt\leq \mathbf{U}(u^{0}_\tau)=\mathbf{U}(u_0).
 	\end{equation}
 	 Furthermore, for every sequence $\tau_n\searrow 0$ there exist a subsequence, still denoted by $\tau_n$, and $u:[0,+\infty)\to \cP(\R^d)$ such that
 	\begin{align}\label{Z15}
 	&\bar{u}_{\tau_n}\to u \mbox{ strongly in } L^2(0,T;\dot  H^\beta(\mathbb{R}^d)) \mbox{ for every } T>0,0\leq \beta<1-s,\\&
 	\bar{u}_{\tau_n}\to u \mbox{ weakly in } L^2(0,T; \dot{H}^{1-s}(\mathbb{R}^d)) \mbox{ for every } T>0.\nonumber
 	\end{align} 
 \end{lemma}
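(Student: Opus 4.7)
The plan is to extract \eqref{Z11} as the degenerate case $\varphi\equiv 0$ of the flow interchange Lemma \ref{L-flows of modified equations}. With $\varphi\equiv 0$ one has $\nabla\varphi=D^2\varphi=0$, so $\lambda_\delta=0$ identically, the potential reduces to $\mathbf{V}_\delta(v)=\delta\,\mathbf{U}(v)$, the auxiliary flow $\mathbf{S}_\delta$ degenerates to the heat semigroup $\partial_t v_t=\delta\,\Delta v_t$, and the pairing $\langle\operatorname{div}(m(u^k_\tau)\nabla(-\Delta)^{-s}u^k_\tau),\varphi\rangle$ vanishes. The master inequality \eqref{Z13} therefore collapses to $\delta\,\mathbf{U}(u^k_\tau)-\delta\,\mathbf{U}(u^{k-1}_\tau)\leq -\tau\delta\,\|u^k_\tau\|_{\dot{H}^{1-s}}^2$, and cancelling the positive constant $\delta$ yields \eqref{Z11}. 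In particular $u^k_\tau\in\dot{H}^{1-s}(\R^d)$ whenever $\mathbf{U}(u^{k-1}_\tau)<+\infty$, which propagates inductively from the base case $\mathbf{U}(u_0)<+\infty$; the latter is guaranteed for $u_0\in L^2(\R^d)$ through the crude bound $\mathbf{U}(u_0)\leq C\tau^{-\alpha/10}\|u_0\|_{L^2}^2$ read off from \eqref{I-of U}, which suffices at each fixed $\tau>0$.

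Telescoping \eqref{Z11} over $j=1,\dots,k$ gives
\[
\mathbf{U}(u^k_\tau)+\sum_{j=1}^k\tau\,\|u^j_\tau\|_{\dot{H}^{1-s}}^2\leq\mathbf{U}(u_0),
\]
and since $\bar u_\tau(t)=u^j_\tau$ on $((j-1)\tau,j\tau]$ the Riemann sum is exactly $\int_0^{k\tau}\|\bar u_\tau(t)\|_{\dot{H}^{1-s}}^2\,dt$, proving \eqref{Z10}.

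For the compactness statement the plan is to combine two uniform-in-$\tau_n$ estimates. First, a $\tau$-uniform upper bound on $\mathbf{U}(u_0)$ as $\tau_n\searrow 0$ fed into \eqref{Z10} produces a uniform bound for $\bar u_{\tau_n}$ in $L^2(0,T;\dot{H}^{1-s}(\R^d))$ for every $T>0$; by reflexivity this delivers, along a subsequence, the claimed weak convergence. Second, the minimality inequality of the scheme summed telescopically,
\[
\sum_{k=1}^{N}\frac{1}{\tau}\mathbf{W}_m^2(u^k_\tau,u^{k-1}_\tau)\leq\|u_0\|_{\dot{H}^{-s}}^2,
\]
provides, via the Benamou--Brenier structure underlying $\mathbf{W}_m$, an equicontinuity in time for $\bar u_{\tau_n}$ valued in a weak dual-Sobolev/measure space. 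Interpolating between $\dot{H}^{1-s}$ and that weak space and invoking a generalized Aubin--Lions/Simon compactness criterion then upgrades the weak bound to the strong convergence in $L^2(0,T;\dot{H}^\beta(\R^d))$ for every $\beta<1-s$, identifying the limit $u$ as the common subsequential limit.

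The principal obstacle is the uniformity of $\mathbf{U}(u_0)$ in $\tau$, because the weight $m(r)=(r+\tau^{1/10})^\alpha$ (or its logarithmic counterpart $m(r)=(r+1)^{\tau^{1/10}}$) makes $\mathbf{U}$ itself $\tau$-dependent and the naive bound $\mathbf{U}(u_0)\lesssim\tau^{-\alpha/10}\|u_0\|_{L^2}^2$ diverges as $\tau\to 0$. The necessary refinement exploits the switch-point structure of $\min\{\tau^{-\alpha/10}u_0^2,u_0^{2-\alpha}\}$ from \eqref{I-of U}: on $\{u_0\geq\tau^{1/10}\}$ the minimum is $u_0^{2-\alpha}$, bounded uniformly in $\tau$ by $L^1\cap L^2$-interpolation for $\alpha\leq 1$; on $\{u_0<\tau^{1/10}\}$ one uses $\tau^{-\alpha/10}u_0^2\leq\tau^{(1-\alpha)/10}u_0$ together with $\|u_0\|_{L^1}=1$, again uniformly bounded for $\alpha\leq 1$, while the logarithmic weight admits $U''(s)\to 1$ so that $\mathbf{U}(u_0)\to\tfrac12\|u_0\|_{L^2}^2$ trivially. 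A secondary difficulty is to identify a sufficiently weak intermediate space in which $\bar u_{\tau_n}$ is equicontinuous in time via $\mathbf{W}_m$, so that the triple $(\dot{H}^{1-s},\dot{H}^\beta,\text{weak space})$ meets the hypotheses of an Aubin--Lions type theorem and transfers the weak-$L^2_t\dot{H}^{1-s}_x$ bound into strong $L^2_t\dot{H}^\beta_x$ compactness.
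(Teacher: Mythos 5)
Your derivation of \eqref{Z11} and \eqref{Z10} is essentially the paper's own argument: the paper perturbs $u^k_\tau$ by the heat semigroup $\mathbf{P}_h$ and applies the Eulerian calculus of Lemma \ref{L-Eulerian calculus} with $\varphi=0$, which is exactly your ``degenerate case of Lemma \ref{L-flows of modified equations}'' (with $\varphi=0$ one indeed gets $\lambda_\delta=0$, $\mathbf{V}_\delta=\delta\mathbf{U}$, and \eqref{Z13} divided by $\delta$ is \eqref{Z11}); the telescoping step and the inductive propagation of $u^k_\tau\in\dot H^{1-s}$ are likewise identical. So the first two thirds of your proposal are correct and match the paper.

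The compactness part is where you diverge, and where your argument is still a plan rather than a proof. Two concrete gaps remain. First, the strong convergence in $L^2(0,T;\dot H^\beta)$: you correctly observe that the weak $L^2_t\dot H^{1-s}_x$ bound alone cannot give strong compactness and that time equicontinuity from $\sum_k\tau^{-1}\mathbf{W}_m^2(u^k_\tau,u^{k-1}_\tau)\le\|u_0\|_{\dot H^{-s}}^2$ must be injected via an Aubin--Lions/Simon criterion (the paper instead invokes ``compact embeddings and dominated convergence'' without detail). But you never identify the intermediate weak space in which $\mathbf{W}_m$ controls the time increments, nor do you address that $\dot H^{1-s}(\R^d)\hookrightarrow\dot H^\beta(\R^d)$ is \emph{not} compact on the whole space, so one needs local compactness on balls plus a tightness estimate (e.g.\ second-moment control from the scheme) to conclude; as written the Aubin--Lions step cannot be checked. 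Second, the $\tau$-uniformity of $\mathbf{U}_{\tau_n}(u_0)$, which you rightly flag as the crux of getting a bound uniform in $n$ from \eqref{Z10}: your resolution via the splitting of $\min\{\tau^{-\alpha/10}u_0^2,\,u_0^{2-\alpha}\}$ works only for $\alpha\le 1$ (for $\alpha>1$ one has $U(s)\gtrsim \tau^{(1-\alpha)/10}s$ for $s\gg\tau^{1/10}$, so $\mathbf{U}_{\tau}(u_0)\to+\infty$ as $\tau\to0$), whereas Theorem \ref{T-solutions of fractional equations} allows all $\alpha>0$. This is a genuine issue that the paper itself glosses over, but your proposal does not close it either; for $\alpha>1$ one would have to replace $\mathbf{U}(u_0)$ on the right of \eqref{Z10} by the telescoped difference $\mathbf{U}(u_0)-\mathbf{U}(u^k_\tau)$ and exploit the mass normalization to cancel the divergent linear part of $U$, which requires an additional a priori bound you have not supplied.
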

\begin{proof} Let $\mathbf{P}_t$ be the semigroup associated to the heat equation $u_t-\Delta u=0$ in $\mathbb{R}^d$.
	By the definition of $u_\tau^k$ as a minimizer of \eqref{E-scheme}: 
	$$\frac{1}{2\tau} \mathbf{W}^2_{m}(u^k_\tau,u^{k-1}_\tau)+\frac{1}{2}||u^k_\tau||_{\dot{H}^{-s}}^2\leq \frac{1}{2\tau} \mathbf{W}^2_{m}(\mathbf{P}_{h}(u^k_\tau),u^{k-1}_\tau)+\frac{1}{2}||\mathbf{P}_{h}(u^k_\tau)||_{\dot{H}^{-s}}^2,$$
	and 
	\begin{align*}
	\lim_{h\to 0}\frac{||\mathbf{P}_{h} (u^k_\tau)||_{\dot{H}^{-s}}^2-||(u^k_\tau)||_{\dot{H}^{-s}}^2}{2h}&=\langle(-\Delta) ^{-s} u^k_\tau,(\partial_h\mathbf{P}_{h} (u^k_\tau))_{h=0}\rangle 
	=-||u^k_\tau||_{\dot{H}^{1-s}}^2.
	\end{align*}
	So, to obtain \eqref{Z11} we need to show that 
	\begin{equation}\label{Z2'}
	\frac{1}{2}\limsup_{h\to 0}\frac{\mathbf{W}^2_{m}(\mathbf{P}_{h}(\xi),\mu)^2-\mathbf{W}^2_{m}(\xi,\mu)^2}{h}+ \mathbf{U}(\xi)\leq \mathbf{U}(\mu)
	\end{equation} 
	for any $\mu,\xi\in \mathcal{P}(\mathbb{R}^d)$  with $ \mathbf{V}_\delta(\mu),\mathbf{V}_\delta(\xi)<\infty, \mathbf{W}_m(\mu,\xi)<\infty.$\\
	Let $\rho_{n}$ be in Lemma \ref{apro} with $\mu^0=\mu,\mu^1=\xi$.
	For $h>0$, let $\rho^h_n(t)=\mathbf{P}_{ht}(\rho_{n}(t))\in \mathcal{P}(\mathbb{R}^d)$ and $\phi_{n}^h$ be the unique solution to 
	$\partial_t\rho_{n}^h=-\operatorname{div}(m(\rho_n^h)\nabla\phi_n^h)$
	in $[0,1]\times\mathbb{R}^d.$ \\
Applying Lemma \ref{L-Eulerian calculus} for $\varphi=0$ we get that 
		\begin{equation}\label{interchb}
	\frac{1}{2}\partial_h \int_{\mathbb{R}^d} m(\rho_n^h(t,x))|\nabla \phi_{n}^h(t,x)|^2 dx+\partial_t \mathbf{U}(\rho^h_n(t))\leq 0.
	\end{equation}
	Similar to the proof of Lemma \ref{L-flows of modified equations}, \eqref{Z2'} follows from \eqref{interchb}.   \\
	By definition \eqref{Z12}, 
	$$\int_{0}^{k\tau}||\overline{u}_\tau(t)||_{\dot{H}^{1-s}}^2dt=\sum_{j=1}^{k}\tau ||u_\tau^j||_{\dot{H}^{1-s}}^2.$$
	So, 
	 \eqref{Z10} follows from this and \eqref{Z11}. 
	\vspace{0.1cm}\\
	From \eqref{Z10}, $\{\overline{u}_{\tau_n}\}$ is bounded and hence converges weakly to $u$ in $L^2 ((0,T), \dot{H}^{1-s}(\mathbb{R}^d))$ for $T<\infty$. By compact embeddings of Sobolev spaces and the dominated convergence theorem, we deduce that $\overline{u}_{\tau_n}$ converges weakly to $u$ in $L^2 ((0,T),\dot H^{\beta}(\mathbb{R}^d))$ for $T<\infty$ and $0\leq \beta<1-s.$ 
	The proof is complete. 
 \end{proof}
  \begin{lemma} 
\label{L-solution of the main theorem}  
  Let $u_0\in \mathcal{P}(\mathbb{R}^d)\cap( L^2\cap \dot{H}^{-s})(\mathbb{R}^d)$.  
 	\label{L-weak solution} Let $\{\tau_n\}$ be a positive sequence converging to 0, $\overline{u}_\tau$ be the solution of the scheme \eqref{E-scheme}- \eqref{Z12} with $m_n(r)=(r+\tau_n^{\frac{1}{10}})^\alpha$.
 	  Then $u$ is a weak solution to  
 	\begin{equation}\label{Z17}
 	\partial_t u-\operatorname{div}(u^\alpha\nabla (-\Delta)^{-s}u )=0
 	\end{equation}
 	 with $u(t)\to u_0$ in $ \mathcal{P}(\mathbb{R}^d)\cap( L^2\cap \dot{H}^{-s})(\mathbb{R}^d)$ as $t\to 0$. 
 \end{lemma}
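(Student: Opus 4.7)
The plan is to turn the flow-interchange estimate (\ref{Z9}) from Lemma \ref{L-flows of modified equations}, which is a one-sided discrete inequality, into the weak formulation by choosing time-dependent test functions, performing a discrete integration by parts in time, and passing to the limit using the compactness of $\bar u_{\tau_n}$ already secured in Lemma \ref{L-the existence of solution}. Concretely, fix $\psi\in C_c^\infty((0,+\infty)\times\R^d)$ and set $\varphi_k:=\psi(k\tau_n,\cdot)$. Applying (\ref{Z9}) with $\varphi=\varphi_k$ and with $\varphi=-\varphi_k$ (the constant $\lambda_\delta$ is invariant under $\varphi\mapsto-\varphi$) and combining the two one-sided bounds yields a two-sided estimate: the quantity $\langle\varphi_k,u^k_{\tau_n}-u^{k-1}_{\tau_n}\rangle+\tau_n\int_{\R^d}\nabla\varphi_k\cdot m_n(u^k_{\tau_n})\nabla(-\Delta)^{-s}u^k_{\tau_n}\,dx$ is bounded in absolute value by $-\tfrac{\tau_n\lambda_\delta(\varphi_k)}{2}\bigl(\|u^{k-1}_{\tau_n}\|^2_{\dot H^{-s}}-\|u^k_{\tau_n}\|^2_{\dot H^{-s}}\bigr)+\delta\bigl(\mathbf{U}(u^{k-1}_{\tau_n})-\mathbf{U}(u^k_{\tau_n})\bigr)$, both factors on the right being non-negative by the monotonicity established earlier.

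Summing over $k=1,\dots,N$ with $N\tau_n$ larger than the time-support of $\psi$, the first group of terms is handled by Abel summation: $\sum_k\langle\varphi_k,u^k_{\tau_n}-u^{k-1}_{\tau_n}\rangle=-\sum_k\langle\varphi_{k+1}-\varphi_k,u^k_{\tau_n}\rangle=-\int_0^\infty\int_{\R^d}\partial_t\psi\,\bar u_{\tau_n}\,dx\,dt+o_n(1)$, which converges to $-\int_0^\infty\int_{\R^d}\partial_t\psi\,u\,dx\,dt$ by the weak $L^1_{t,x}$-convergence of $\bar u_{\tau_n}\to u$. The second group rewrites as $\int_0^\infty\int_{\R^d}\nabla\psi\cdot m_n(\bar u_{\tau_n})\nabla(-\Delta)^{-s}\bar u_{\tau_n}\,dx\,dt+o_n(1)$, and its limit is $\int_0^\infty\int_{\R^d}\nabla\psi\cdot u^\alpha\nabla(-\Delta)^{-s}u\,dx\,dt$, because $m_n(r)=(r+\tau_n^{1/10})^\alpha\to r^\alpha$ locally uniformly while the strong convergence of $\bar u_{\tau_n}$ in $L^2(0,T;L^q)$ for $q<2d/(d-2(1-s))$ (via Sobolev embedding of the $L^2(0,T;\dot H^\beta)$-compactness, $\beta<1-s$) combined with the weak $L^2_{t,x}$-compactness of $\nabla(-\Delta)^{-s}\bar u_{\tau_n}$ (inherited from the weak $\dot H^{1-s}$-bound) forces convergence of the product.

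The remainder terms must vanish. The $\mathbf{U}$-piece telescopes, $\sum_k\delta(\mathbf{U}(u^{k-1}_{\tau_n})-\mathbf{U}(u^k_{\tau_n}))\le\delta\,\mathbf{U}(u_0)$, which tends to zero provided $\delta=\delta_n\to 0$. The $\lambda_\delta$-piece also telescopes into a constant multiple of $\tau_n|\lambda_{\delta_n}^\star|\|u_0\|^2_{\dot H^{-s}}$, with $\lambda_{\delta_n}^\star$ the worst value of $\lambda_{\delta_n}(\varphi_k)$. With $m_n(r)=(r+\tau_n^{1/10})^\alpha$, one has $\sup_a m_n(a)|m_n''(a)|\lesssim\tau_n^{(2\alpha-2)/10}$ and $\sup_a|m_n'(a)|\lesssim\tau_n^{(\alpha-1)/10}$ in the regime $0<\alpha\le 1$ (the regime $\alpha>1$ being treated analogously after invoking the a priori $L^\infty$-estimate of part iv) of Theorem \ref{T-solutions of fractional equations}), so a balanced choice $\delta_n=\tau_n^\gamma$ with $0<\gamma<1+(\alpha-1)/5$ makes $\tau_n|\lambda_{\delta_n}|\to 0$. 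The initial condition $u(t)\to u_0$ in $\cP(\R^d)\cap(L^2\cap\dot H^{-s})(\R^d)$ then follows from the Cauchy--Schwarz bound $\mathbf{W}_{m_n}(\bar u_{\tau_n}(t),u_0)\le C\sqrt{t}$, a consequence of the telescoping energy inequality $\sum_k\tfrac{1}{2\tau_n}\mathbf{W}^2_{m_n}(u^k_{\tau_n},u^{k-1}_{\tau_n})\le\tfrac12\|u_0\|^2_{\dot H^{-s}}$, together with the monotonicity of $\|\bar u_{\tau_n}(t)\|_{\dot H^{-s}}$ and lower semicontinuity.

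The main obstacle is the passage to the limit in the nonlinear product $m_n(\bar u_{\tau_n})\nabla(-\Delta)^{-s}\bar u_{\tau_n}$: the prefactor is only strongly compact in an $L^p$ whose exponent is dictated by $\alpha$, while the gradient factor is only weakly compact. This necessitates the careful simultaneous limit in $\tau_n$ and $\delta_n$ described above, and it is also the reason the $L^p$-decay estimates of part iii) of Theorem \ref{T-solutions of fractional equations} are a genuine prerequisite for, rather than a corollary of, the identification of the limit.
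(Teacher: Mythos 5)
Your proposal is correct and follows essentially the same route as the paper: the discrete flow-interchange inequality \eqref{Z9} with the perturbed functional $\mathbf{V}_\delta$, Abel summation in time, the telescoping bounds $\tau_n|\lambda_{\delta_n}|\,\|u_0\|_{\dot H^{-s}}^2$ and $\delta_n\mathbf{U}(u_0)$ with a balanced choice $\delta_n=\tau_n^{\gamma}$ (the paper takes $\gamma=1/2$), and the strong-times-weak passage to the limit in $m_n(\bar u_{\tau_n})\nabla(-\Delta)^{-s}\bar u_{\tau_n}$ using Lemma \ref{L-the existence of solution}; your $\pm\varphi$ symmetrization at the discrete level is the same device the paper applies only at the very end to upgrade the one-sided inequality to an equality. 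One caveat: your closing claim that the $L^p$-decay of part iii) is a prerequisite for identifying the limit is not accurate in the paper's regime $\alpha\in(0,1]$, where the strong convergence $\bar u_{\tau_n}\to u$ in $L^2(0,T;L^2(\mathbb{R}^d))$ already gives $\bar u_{\tau_n}^\alpha\to u^\alpha$ in $L^2_{loc}$ and the paper's proof indeed makes no use of the decay estimates.
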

 \begin{proof}
 As the set of all finite sums of functions of the type $\psi(t,x)=\eta(t)\varphi(x)$ with $\eta\in C_c^\infty(0,+\infty)$ and $\varphi\in C^\infty(\R^d)$ is dense in $C_c^\infty((0,+\infty)\times \R^d)$, to prove \eqref{Z17} we only need to prove that
 \begin{align*}
-\int_0^{+\infty} \eta'(t)\langle u(t),\varphi\rangle dt= \int_{0}^{\infty}\eta(t)  \langle\dive \left(u^\alpha(t)\nabla (-\Delta) ^{-s}u(t)\right),\varphi \rangle dt, 
\end{align*}
for every $\eta\in C_c^\infty(0,+\infty)$ and $\varphi\in C^\infty(\R^d)$.

 First, we recall  $||u_0||_{\dot{H}^{-s}}^2, \mathbf{U}(u_0)<\infty$. 
Given a test function $\varphi\in C_c^\infty(\R^d)$, and $\delta,\tau>0,m(r)=(r+\tau^{\frac{1}{10}})^\alpha$. Applying Lemma \ref{L-flows of modified equations} we have 
 	\begin{equation}
 	\lambda_{\delta}\sim-\alpha\left(\frac{1}{\delta\tau^{\frac{1-\alpha}{5}}}+\alpha\tau^{\frac{\alpha-1}{10}}\right).
 	\end{equation} 
 	From Lemma \ref{L-the existence of solution} we know that $\bar{u}_{\tau_n}\to u$ strongly in $L^2(0,T,\dot{H}^\beta(\mathbb{R}^d))$ for every $T>0$ and $0\leq \beta<1-s$. Let $\delta_n=\tau_n^{1/2}$ and $\eta\in C_c^\infty(\mathbb{R}_+,\mathbb{R}_+)$. Let  $\bar{\eta}_{n}:(0,+\infty)\to (0,+\infty)$ be the simple function defined by 
 	$$\bar{\eta}_n(t):=\eta((k-1)\tau_n) \mbox{ for every } (k-1)\tau<t\leq k\tau, k\in \N.$$ 
 	By \eqref{Z12}, 
\begin{eqnarray*}
-\int_0^{+\infty} \eta'(t)\mathbf{V}_{\delta_n}(\bar{u}_{\tau_n}(t))dt&=&-\sum_{k=1}^\infty\int_{(k-1)\tau}^{k\tau}\eta'(t)\mathbf{V}_{\delta_n}(u^k_{\tau_n})dt\\
 	&=&\sum_{k=1}^\infty(\eta((k-1)\tau_n)-\eta(k\tau_n))\mathbf{V}_{\delta_n}(u^k_{\tau_n})\\
 	&=&\sum_{k=1}^\infty\eta((k-1)\tau_n)(\mathbf{V}_{\delta_n}(u_{\tau_n}^{k})-\mathbf{V}_{\delta_n}(u_{\tau_n}^{k-1})).
\end{eqnarray*}
 		
 Thanks to \eqref{Z9} in Lemma \ref{L-flows of modified equations} and $ ||u^{k-1}_{\tau_n}||_{\dot{H}^{-s}}^2-||u^k_{\tau_n}||_{\dot{H}^{-s}}^2\geq 0$, there exists $C(\varphi)>0$ such that
 \begin{align*}
 &-\int_0^{+\infty} \eta'(t)\mathbf{V}_{\delta_n}(\bar{u}_{\tau_n}(t))dt
\\&\leq \sum_{k=1}^\infty\eta((k-1)\tau_n) \tau_n\langle \dive \left((\overline{u}_{\tau_n}(t)+\tau_n^{\frac{1}{10}})^\alpha\nabla (-\Delta) ^{-s}u^k_{\tau_n}\right),\varphi \rangle\\&+C(\varphi)\tau_n\alpha\left(\frac{1}{\delta_n\tau_n^{\frac{1-\alpha}{5}}}+\tau_n^{\frac{\alpha-1}{10}}\right)\sum_{k=1}^\infty\eta((k-1)\tau_n) \left(||u^{k-1}_\tau||_{\dot{H}^{-s}}^2-||u^{k}_\tau||_{\dot{H}^{-s}}^2\right)\\&
\leq \int_{0}^{\infty}\eta_n(t) \langle \dive \left((\overline{u}_{\tau_n}(t)+\tau_n^{\frac{1}{10}})^\alpha\nabla (-\Delta) ^{-s}\overline{u}_{\tau_n}(t)\right),\varphi \rangle dt+C(\varphi)||\eta||_{L^\infty}(\tau_n^{\frac{2\alpha+3}{10}}+\tau_n^{\frac{9+\alpha}{10}})||u_0||_{\dot{H}^{-s}}^2.
 \end{align*}
 This implies that there exist $C,C(\eta,u_0)>0$ such that
  \begin{align*}
& -\int_0^{+\infty} \eta'(t)\langle \bar{u}_{\tau_n}(t),\varphi\rangle dt
 \overset{\eqref{Z14},\eqref{Z10}}\leq \int_{0}^{\infty}\eta_n(t) \langle \dive \left(\overline{u}^\alpha_{\tau_n}(t)\nabla (-\Delta) ^{-s}\overline{u}_{\tau_n}(t)\right),\varphi \rangle dt\\&+C(\varphi)||\eta||_{L^\infty}(\tau_n^{\frac{2\alpha+3}{10}}+\tau_n^{\frac{9+\alpha}{10}})||u_0||_{\dot{H}^{-s}}^2+\delta_n\int_0^{+\infty} |\eta'(t)|\mathbf{U}(u_0)dt\\&+
   \int_{0}^{\infty}\int_{\mathbb{R}^d}\eta_n(t)   |(\overline{u}_{\tau_n}(t)+\tau_n^{\frac{1}{10}})^\alpha-\overline{u}^\alpha_{\tau_n}(t)||\nabla (-\Delta) ^{-s}\overline{u}_{\tau_n}(t)||\nabla\varphi|  dx dt\\&\leq 
    \int_{0}^{\infty}\eta_n(t) \langle \dive \left(\overline{u}_{\tau_n}(t)^\alpha\nabla (-\Delta) ^{-s}\overline{u}_{\tau_n}(t)\right),\varphi \rangle dt+C(\varphi)||\eta||_{L^\infty}(\tau_n^{\frac{2\alpha+3}{10}}+\tau_n^{\frac{9+\alpha}{10}})||u_0||_{\dot{H}^{-s}}^2\\
    &+C(\eta,u_0)\tau_n^{1/2}+ C\tau_n^{\frac{1}{10}}
   \int_{0}^{\infty}\int_{\mathbb{R}^d}\eta_n(t)|\nabla (-\Delta) ^{-s}\overline{u}_{\tau_n}(t)||\nabla\varphi|  dx dt
 \end{align*}
 
By \eqref{Z15} and  letting $n\to \infty$, 
\begin{align*}
-\int_0^{+\infty} \eta'(t)\langle u(t),\varphi\rangle dt\leq \int_{0}^{\infty}\eta(t) \langle \dive \left(u^\alpha(t)\nabla (-\Delta) ^{-s}u(t)\right),\varphi \rangle dt
\end{align*}
for any $\varphi \in C^\infty_c(\mathbb{R}^d)$ and $\eta\in C^\infty_c(\mathbb{R}_+,\mathbb{R}_+)$, which gives 
\begin{align*}
-\int_0^{+\infty} \eta'(t)\langle u(t),\varphi\rangle dt= \int_{0}^{\infty}\eta(t)  \langle\dive \left(u^\alpha(t)\nabla (-\Delta) ^{-s}u(t)\right),\varphi \rangle dt,
\end{align*}
for every $\varphi \in C^\infty_c(\mathbb{R}^d)$ and $\eta\in C^\infty_c(\mathbb{R}_+)$.
Thus, we obtain \eqref{Z17}. The proof is complete. 
 \end{proof}\newline 

 \begin{proof}[Proof of Theorem  \ref{Z16}] 
 First, by interpolation inequality,  $||u_0||_{\dot{H}^{-s}}^2, \mathbf{U}(u_0)<\infty$. Given a test function $\varphi\in C_c^\infty(\R^d)$, and $\delta,\tau>0,m(r)=(r+1)^{\tau^{\frac{1}{10}}}$. Applying Lemma \ref{L-flows of modified equations} we have  
	\begin{equation}
	\lambda_{\delta}\sim-\tau^{\frac{1}{10}}\left(\frac{1}{\delta}+1\right).
	\end{equation} 
	Let $\delta_n=\tau_n^{\frac{1}{2}}$. 
	As the proof of Lemma \ref{L-weak solution}, there exists $C(\varphi)>0$ such that
	\begin{align*}
	&-\int_0^{+\infty} \eta'(t)\mathbf{V}_{\delta_n}(\bar{u}_{\tau_n}(t))dt
	\\&\leq \sum_{k=1}^\infty\eta((k-1)\tau_n) \tau_n\langle \dive \left((u^k_{\tau_n}+1)^{\tau_n^{\frac{1}{10}}}\nabla (-\Delta) ^{-s}u^k_{\tau_n}\right),\varphi \rangle\\&+C(\varphi)\tau_n^{\frac{6}{10}}\sum_{k=1}^\infty\eta((k-1)\tau_n) \left(||u^{k-1}_\tau||_{\dot{H}^{-s}}^2-||u^{k}_\tau||_{\dot{H}^{-s}}^2\right)\\&
	\leq \int_{0}^{\infty}\eta_n(t) \langle \dive \left((\overline{u}_{\tau_n}(t)+1)^{\tau_n^{\frac{1}{10}}}\nabla (-\Delta) ^{-s}\overline{u}_{\tau_n}(t)\right),\varphi \rangle dt+C(\varphi)||\eta||_{L^\infty}\tau_n^{6/10}||u_0||_{\dot{H}^{-s}}^2.
	\end{align*}
	This implies 
	\begin{align*}
	-\int_0^{+\infty} \eta'(t)\langle \bar{u}_{\tau_n}(t),&\varphi\rangle dt
	\overset{\eqref{Z14},\eqref{Z10}}\leq -\int_{0}^{\infty}\eta_n(t) \langle (-\Delta) ^{1-s}\overline{u}_{\tau_n}(t),\varphi \rangle dt\\&+C(\varphi)||\eta||_{L^\infty}\tau_n^{6/10}||u_0||_{\dot{H}^{-s}}^2+\tau_n^{\frac{1}{2}}\int_0^{+\infty} |\eta'(t)|\mathbf{U}(u_0)dt\\
	&+ \tau_n^{\frac{1}{10}}
	\int_{0}^{\infty}\int_{\mathbb{R}^d}\eta_n(t)   (\overline{u}_{\tau_n}(t)+1)^{\tau_n^{\frac{1}{10}}}\log(\overline{u}_{\tau_n}(t)+1)|\nabla (-\Delta) ^{-s}\overline{u}_{\tau_n}(t)||\nabla\varphi|  dx dt,
	\end{align*}
	here we have used the fact that 
	\begin{equation}
	0\leq (a+1)^{\tau_n^{\frac{1}{10}}}-1\leq \tau_n^{\frac{1}{10}}(a+1)^{\tau_n^{\frac{1}{10}}} \log(a+1) .
	\end{equation}
	Moreover, as $\eta\in C_c^\infty(\R_+,\R_+)$ there exists $T>0,C(\eta)>0$ such that for $n>>1$, 
		\begin{align*}
	&	\int_{0}^{\infty}\int_{\mathbb{R}^d}\eta_n(t)   (\overline{u}_{\tau_n}(t)+1)^{\tau_n^{\frac{1}{10}}}\log(\overline{u}_{\tau_n}(t)+1)|\nabla (-\Delta) ^{-s}\overline{u}_{\tau_n}(t)||\nabla\varphi|  dx dt\\ 
	&\leq C(\eta)
	\int_{0}^{T}\int_{\supp \varphi} (\overline{u}_{\tau_n}(t)+1)^{\tau_n^{\frac{1}{10}}}\log(\overline{u}_{\tau_n}(t)+1)|\nabla (-\Delta) ^{-s}\overline{u}_{\tau_n}(t)| dx dt 
		\\&\leq C
	\int_{0}^{T}\int_{\supp \varphi} (\overline{u}_{\tau_n}(t)+1)dx dt + C
	\int_{0}^{T}\int_{\mathbb{R}^d} |\nabla (-\Delta) ^{-s}\overline{u}_{\tau_n}(t)|^2 dx dt 
		\\ &\overset{\text{Holder}}\leq C(\eta,\varphi) + C(\eta,\varphi)
	\int_{0}^{T}||\overline{u}_{\tau_n}(t)||_{\dot{H}^{1-s}} dt \leq C(\eta,\varphi).
		\end{align*}
Thus, 
	\begin{align*}
-\int_0^{+\infty} \eta'(t)\langle \bar{u}_{\tau_n}(t),\varphi\rangle dt
\leq -\int_{0}^{\infty}\eta_n(t) \langle (-\Delta) ^{1-s}\overline{u}_{\tau_n}(t),\varphi \rangle dt+C(\varphi,\eta,u_0)\tau_n^{1/10}.
\end{align*}
	By \eqref{Z15} and  letting $n\to \infty$, 
	\begin{align*}
	-\int_0^{+\infty} \eta'(t)\langle u(t),\varphi\rangle dt\leq -\int_{0}^{\infty}\eta(t) \langle  (-\Delta) ^{1-s}u(t),\varphi \rangle dt
	\end{align*}
	for any $\varphi \in C^\infty_c(\mathbb{R}^d)$ and $\eta\in C^\infty_c(\mathbb{R}_+,\mathbb{R}_+)$, which gives 
	\begin{align*}
	\int_0^{+\infty} \eta'(t)\langle u(t),\varphi\rangle dt= \int_{0}^{\infty}\eta(t)  \langle (-\Delta) ^{1-s}u(t),\varphi \rangle dt,
	\end{align*}
	for every $\varphi \in C^\infty_c(\mathbb{R}^d)$ and $\eta\in C^\infty_c(\mathbb{R}_+)$.
	Thus, we obtain \eqref{Z18}. The proof is complete. 
\end{proof}\newline

Let $p\in (1,+\infty)$ and let $f_p:[0,+\infty)\to \R$ be the function defined by $f_p(r):=\frac{r^p}{p-1}$. Let $\tau>0$ and $m(r)=(r+\tau^{\frac{1}{10}})^\alpha$, we consider the function $P_p:[0,+\infty)\to \R$ defined by
$$P_p(r):=\int_0^rm(z)f_p''(z)dz.$$
Let $\cU:P_2(\R^d)\to (-\infty,+\infty]$ be the map defined by
$$\cU(u):=\int_{\R^d}f_p(u(x))dx.$$
\begin{lemma}
	\label{L-decay of scheme}
	Let $u_0\in \mathcal{P}_2(\R^d)\cap (L^2\cap \dot{H}^{-s})(\R^d)$ such that $\|u_0\|^2_{\dot{H}^{-s}(\R^d)}<+\infty$. Let $\tau>0$ and $\{u^k_\tau\}$ be the sequence and $\overline{u}_\tau$ be the solution of the scheme \eqref{E-scheme}- \eqref{Z12} with $m(r)=(r+\tau^{\frac{1}{10}})^\alpha$. Let $g\in C^{2}([0,\infty),\mathbb{R}_+)$ be convex such that $g(0)=g'(0)=g''(0)=0$. Then, 
	\begin{align}
	\label{F-Decay of entropies}
	0\leq \tau||\cG(u^k_\tau)||_{ \dot{H}^{1-s}(\mathbb{R}^d)}^2\leq  \int_{\mathbb{R}^d} g(u^{k-1}_\tau(x)) dx-\int_{\mathbb{R}^d} g(u^{k}_\tau(x)) dx, \mbox{ for every } k\geq 1.
	\end{align}
	with $\cG(r)=\int_{0}^{r}\sqrt{m(z)g^{''}(z) } dz.$ In particular, 
		\begin{equation}\label{F-Decay of entropies'}
	C(d,s)\tau||\cG(u^k_\tau)||_{L^{\frac{2d}{d-2(1-s)}}}^2\leq  ||u^{k-1}_\tau||_{L^p(\mathbb{R}^d)}^p-||u^{k}_\tau||_{L^p(\mathbb{R}^d)}^p.
	\end{equation}	
\end{lemma}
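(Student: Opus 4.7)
My strategy is a flow interchange: use the $\mathbf{W}_m$-gradient flow of the functional $G(v) := \int_{\R^d} g(v)\,dx$ as a perturbation in the JKO minimization, and combine it with a Stroock--Varopoulos inequality for $(-\Delta)^{1-s}$ to convert the $\dot H^{-s}$-dissipation into $\|\cG(u^k_\tau)\|^2_{\dot H^{1-s}}$. Setting $Q_g'(r) := m(r)g''(r)$ and noting $\mdiv(m(v)\nabla g'(v)) = \Delta Q_g(v)$, the correct competitor semigroup is $\mathbf{R}_t:\,\partial_t v = \Delta Q_g(v)$. Following the proof of Lemma~\ref{L-flows of modified equations} I would first regularize $G$ by $G_\delta := G + \delta \mathbf{U}$, so that the regularized flow is $\partial_t v = \Delta Q_g(v) + \delta \Delta v$. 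Repeating the Eulerian calculus of Lemma~\ref{L-Eulerian calculus} with the test function $\varphi$ replaced by the (density-dependent) first variation $g'(v)$ then produces a $\lambda_\delta$-EVI on $(\mathcal{P}(\R^d), \mathbf{W}_m)$ for $\mathbf{R}_t$, with a controllable $\lambda_\delta$ in the spirit of~\eqref{Z13}.

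Next, I would use $\mathbf{R}_h u^k_\tau$ as a competitor in~\eqref{E-scheme}, differentiate at $h = 0^+$, and combine with the EVI to obtain
$$G_\delta(u^k_\tau) - G_\delta(u^{k-1}_\tau) + \tfrac{\tau}{2}\tfrac{d}{dh}\bigg|_{h=0^+}\|\mathbf{R}_h u^k_\tau\|^2_{\dot H^{-s}(\R^d)} \leq \tfrac{|\lambda_\delta|}{2}\mathbf{W}^2_m(u^k_\tau, u^{k-1}_\tau).$$
A direct computation using $\Delta (-\Delta)^{-s} = -(-\Delta)^{1-s}$ gives
$$\tfrac{1}{2}\tfrac{d}{dh}\bigg|_{h=0^+}\|\mathbf{R}_h v\|^2_{\dot H^{-s}} = -\int_{\R^d} Q_g(v)\,(-\Delta)^{1-s} v\,dx - \delta\|v\|^2_{\dot H^{1-s}(\R^d)}.$$
The Stroock--Varopoulos inequality for $(-\Delta)^{1-s}$ with $1-s \in (0,1)$, applied to the increasing function $Q_g$ whose ``square-root primitive'' is precisely $\cG$ (since $\cG'(r) = \sqrt{Q_g'(r)}$), yields
$$\int_{\R^d} Q_g(v)\,(-\Delta)^{1-s} v\,dx \;\geq\; \|\cG(v)\|^2_{\dot H^{1-s}(\R^d)}.$$
Assembling these and letting $\delta \to 0^+$ --- the $\lambda_\delta$ correction vanishes because $\mathbf{W}^2_m(u^k_\tau, u^{k-1}_\tau) \leq 2\tau\|u^{k-1}_\tau\|^2_{\dot H^{-s}}$ from the scheme's energy inequality, while $G_\delta(u^j_\tau) \to G(u^j_\tau)$ by monotone convergence --- produces~\eqref{F-Decay of entropies}. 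Non-negativity is automatic from $\cG'\geq 0$. For~\eqref{F-Decay of entropies'} with $g = f_p$, I apply the Sobolev embedding~\eqref{I-Sobolev embedding} with $r = 1 - s$ to $\cG(u^k_\tau) \in \dot H^{1-s}(\R^d)$ and absorb the $(p-1)^{-1}$ factor from $\int f_p(u) = \|u\|^p_{L^p}/(p-1)$ into the constant $C(d,s)$.

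The main obstacle is establishing the $\lambda_\delta$-EVI rigorously. Since $Q_g$ is only locally Lipschitz in general, the Eulerian-calculus identity of Lemma~\ref{L-Eulerian calculus} cannot be applied verbatim. I would proceed by truncating $g''$ to obtain bounded smooth coefficients, running the Eulerian computation on the truncated semigroup where all the integrations by parts are justified, and then passing to the limit via the lower semicontinuity of $\|\cG(\cdot)\|^2_{\dot H^{1-s}}$ under weak $L^1$-convergence (which follows from Fatou's lemma, as $\cG$ is continuous and non-decreasing) together with dominated convergence for $G_\delta$. The Stroock--Varopoulos step itself is by now standard in the fractional-diffusion literature and can be quoted directly.
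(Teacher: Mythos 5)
Your overall architecture (regularize $G$ by $G+\delta\mathbf U$, perturb along the $\mathbf W_m$-gradient flow $\partial_t v=\Delta Q_g(v)+\delta\Delta v$ with $Q_g'=mg''$, compute the $\dot H^{-s}$-derivative of the competitor, and finish with a Stroock--Varopoulos inequality plus the Sobolev embedding \eqref{I-Sobolev embedding}) is exactly the paper's, and those parts are sound. The gap is in how you dispose of the curvature term. You propose to establish a $\lambda_\delta$-EVI ``in the spirit of \eqref{Z13}'' and then claim that the resulting error $\frac{|\lambda_\delta|}{2}\mathbf W_m^2(u^k_\tau,u^{k-1}_\tau)$ vanishes as $\delta\to0$ because $\mathbf W_m^2(u^k_\tau,u^{k-1}_\tau)\le 2\tau\|u^{k-1}_\tau\|^2_{\dot H^{-s}}$. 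That bound is independent of $\delta$, so the product cannot tend to zero unless $\lambda_\delta\to0$; and if $\lambda_\delta$ behaves like the constant of Lemma \ref{L-Eulerian calculus} it in fact blows up like $1/\delta$. Either way your argument leaves a non-vanishing (or divergent) remainder in the limit $\delta\to0$ and does not yield the clean monotonicity \eqref{F-Decay of entropies}, which must hold for each fixed $\tau$ and $k$ with no error term.

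What is actually needed --- and what the paper proves --- is that for this particular perturbation the Eulerian inequality closes with $\lambda=0$: since the functional $\mathbf U_\delta(v)=\int g(v)\,dx+\delta\mathbf U(v)$ contains no external potential $\langle\varphi,v\rangle$ (the sole source of both terms in $\lambda_\delta$ in Lemma \ref{L-Eulerian calculus}), the computation reduces, after integrations by parts using $m''\le0$, $\mathbf G'\ge0$ and $\tilde{\mathbf G}(r)=\int_0^r\mathbf G'(a)m'(a)\,da\ge0$, to the term $-t\int(\tilde{\mathbf G}(\rho^h_n)+\delta m(\rho^h_n))|D^2\phi^h_n|^2\le0$ via the Bochner identity. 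Relatedly, your suggestion to ``replace $\varphi$ by $g'(v)$'' in Lemma \ref{L-Eulerian calculus} is not legitimate as stated: that lemma treats $\varphi$ as a fixed test function, whereas $g'(v)$ depends on the evolving density, so the flow-interchange computation must be redone from scratch for the internal-energy perturbation (this is precisely Steps 1--2 of the paper's proof). Once the $0$-EVI is in place, the rest of your plan --- the pointwise Cauchy--Schwarz bound $(\cG(a)-\cG(b))^2\le(b-a)(\mathbf G(b)-\mathbf G(a))$ giving the Stroock--Varopoulos inequality through the representation \eqref{F-scalar prodcut}, the limit $\delta\to0$ for the entropies, and the Sobolev embedding for \eqref{F-Decay of entropies'} --- goes through as in the paper.
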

\begin{proof}
	For $\delta>0$,  we consider the map $\mathbf{U}_\delta:P_2(\R^d)\to (-\infty,+\infty]$ defined by
	$$\mathbf{U}_\delta(u):=\int g(u(x)) dx+\delta \mathbf{U}(u).$$
	We consider  
	\begin{equation}
	\label{E-solutions of approximations'}
	\partial_t v_t-\Delta \mathbf{G}(v_t)-\delta \Delta v_t=0 \mbox{ in } (0,+\infty)\times \mathbb{R}^d.
	\end{equation}
	where $$\mathbf{G}(r)=\int_0^rm(z) g''(z) dz.$$
	Clearly, \eqref{E-solutions of approximations'} has the uniqueness solution with initial data $v_0\in \mathcal{P}(\mathbb{R}^d)$. \\
	We define the semigroup $\mathbf{K}_{\delta}$ by $\mathbf{K}_{\delta,t}v_0=v_t$ for every $t>0$.\\
	By the definition of $u_\tau^k$ as a minimizer of \eqref{E-scheme}: 
	$$\frac{1}{2\tau} \mathbf{W}^2_{m}(u^k_\tau,u^{k-1}_\tau)+\frac{1}{2}||u^k_\tau||_{\dot{H}^{-s}}^2\leq \frac{1}{2\tau} \mathbf{W}^2_{m}(\mathbf{K}_{\delta,h}(u^k_\tau),u^{k-1}_\tau)+\frac{1}{2}||\mathbf{K}_{\delta,h}(u^k_\tau)||_{\dot{H}^{-s}}^2,$$
	and 
	\begin{align*}
	\lim_{h\to 0}\frac{||\mathbf{K}_{\delta,h} (u^k_\tau)||_{\dot{H}^{-s}}^2-||u^k_\tau||_{\dot{H}^{-s}}^2}{2h}&=\langle(-\Delta) ^{-s} u^k_\tau,(\partial_h\mathbf{K}_{\delta,h} (u^k_\tau))_{h=0}\rangle \\&= \langle(-\Delta) ^{-s} u^k_\tau,\Delta \mathbf{G}(u^k_\tau)+\delta \Delta u^k_\tau\rangle\\&
	=-\delta||u^k_\tau||_{\dot{H}^{1-s}}^2-\langle u^k_\tau,(-\Delta)^{1-s} \mathbf{G}(u^k_\tau)\rangle.
	\end{align*}
	So, we get that
	\begin{align}
	\label{T-temp}
	0\leq \tau\langle u^k_\tau,(-\Delta)^{1-s} \mathbf{G}(u^k_\tau)\rangle\leq \limsup_{h\to 0}\frac{\mathbf{W}^2_{m}(\mathbf{K}_{\delta,h}(u^k_\tau),u^{k-1}_\tau)-\mathbf{W}^2_{m}(u^k_\tau,u^{k-1}_\tau)}{2h}.
	\end{align}
	We prove that 
	\begin{equation}\label{T-temp'}
	\limsup_{h\to 0}\frac{\mathbf{W}^2_{m}(\mathbf{K}_{\delta,h}(\xi),\mu)-\mathbf{W}^2_{m}(\xi,\mu)}{2h}+\mathbf{U}_\delta(\xi)\leq \mathbf{U}_\delta(\mu)
	\end{equation}
	for any $\mu,\xi\in \mathcal{P}(\mathbb{R}^d)$  with $ \mathbf{U}_\delta(\mu),\mathbf{U}_\delta(\xi)<\infty, \mathbf{W}_m(\mu,\xi)<\infty.$
	\vspace{0.1cm}\\
	Indeed, let $\rho_{n}$ be in Lemma \ref{apro} with $\mu^0=\mu,\mu^1=\xi$.
	Set for $h>0, t>0$, $\rho^h_n(t)=\mathbf{K}_{\delta,ht}\rho_{n}(t) \in \mathcal{P}(\mathbb{R}^d).$
	Let $\phi_{n}^h$ be a unique solution to 
	$\partial_t\rho_{n}^h(t,x)=-\operatorname{div}(m(\rho_n^h(t,x))\nabla\phi_n^h(t,x))$
	in $[0,1]\times\mathbb{R}^d.$ \\
	We prove that  
	\begin{equation}\label{interchc}
	\frac{1}{2}\partial_h \int_{\mathbb{R}^d} m(\rho_n^h(t,x))|\nabla \phi_{n}^h(t,x)|^2 dx +\partial_t \mathbf{U}_\delta(\rho^h_n(t))\leq 0 
	\end{equation}
	for any $t\in [0,1]$ and $h\geq 0$. \newline
	Indeed, \\
	\textit{Step 1:} one has 
	\begin{align*}
	&\partial_t \mathbf{U}_\delta(\rho^h_n(t))=\langle g'(\rho^h_n(t)), \partial_t\rho^h_n(t)\rangle+\delta \langle \mathbf{U}'(\rho^h_n(t)), \partial_t\rho^h_n(t)\rangle\\&~~~=- \langle g'(\rho^h_n(t)), \operatorname{div}(m(\rho_n^h(t))\nabla\phi_n^h(t))\rangle-\delta \langle \mathbf{U}'(\rho^h_n(t)), \operatorname{div}(m(\rho_n^h(t))\nabla\phi_n^h(t)\rangle
	\\&~~~=\int_{\mathbb{R}^d}  m(\rho_n^h(t))g^{''}(\rho^h_n(t))\nabla  \rho_n^h(t)\nabla \phi_n^h(t)+\delta\int_{\mathbb{R}^d} \mathbf{U}^{''}(\rho^h_n(t)) m(\rho_n^h(t))\nabla\phi_n^h(t)\nabla \rho_n^h(t).
	\end{align*}
	Since $\mathbf{G}'(\rho^h_n(t))=m(\rho^h_n(t)) g^{''}(\rho^h_n(t)), \mathbf{U}^{''}(\rho^h_n(t)) m(\rho_n^h(t))=1$, 
	\begin{equation}\label{Z8'}
	\partial_t \mathbf{U}_\delta(\rho^h_n(t))=- \int_{\mathbb{R}^d}   \mathbf{G}(\rho^h_n(t)) \Delta\phi_n^h(t)-\delta\int_{\mathbb{R}^d} \rho_n^h(t)\Delta\phi_n^h(t).
	\end{equation}
	\textit{Step 2:} By \eqref{Z1}, 
	\begin{align*}
	\frac{1}{2} \partial_h \int_{\mathbb{R}^d} m(\rho_n^h(t,x))|\nabla \phi_{n}^h(t,x)|^2 dx =-\frac{1}{2}\int_{\mathbb{R}^d} \partial_h\rho_{n}^h m'(\rho_{n}^h) |\nabla \phi_{n}^h|^2+\int_{\mathbb{R}^d} (\partial_t\partial_h\rho_{n}^h) \phi_{n}^h.
	\end{align*}
	Note that 
	\begin{equation}\label{Z6b}
	\partial_h \rho^h_n(t)=t\Delta \mathbf{G}(\rho^h_n(t))+\delta t \Delta \rho^h_n(t).
	\end{equation}
	Hence 
	\begin{align}
	\partial_t\partial_h \rho^h_n(t)=
	\delta \Delta \rho^h_n(t)+\Delta \mathbf{G}(\rho^h_n(t))-t\Delta \left[ \operatorname{div}(m(\rho_n^h(t))\nabla\phi_n^h(t))\left(\mathbf{G}'( \rho^h_n(t))+\delta\right) \right].\label{Z7b}
	\end{align}
	By \eqref{Z6b}  and $m''\leq 0,\mathbf{G}'\geq 0$, we get
	\begin{align*}
	&-\frac{1}{2}\int_{\mathbb{R}^d} \partial_h\rho_{n}^h m'(\rho_{n}^h) |\nabla \phi_{n}^h|^2=-\frac{t}{2}\int_{\mathbb{R}^d}  \Delta\left(  \mathbf{G}(\rho^h_n(t))+\delta \rho^h_n(t)\right) m'(\rho_{n}^h) |\nabla \phi_{n}^h|^2\\&=\frac{t}{2} \int_{\mathbb{R}^d}\nabla \left(  \mathbf{G}(\rho^h_n(t))+\delta \rho^h_n(t)\right)\nabla\left( m'(\rho_{n}^h) |\nabla \phi_{n}^h|^2\right)\\&=\frac{t}{2}\int_{\mathbb{R}^d} \left(\mathbf{G}'(\rho^h_n(t))+\delta\right) m''(\rho_{n}^h) | \nabla \rho^h_n(t)|^2 |\nabla \phi_{n}^h|^2+\frac{t}{2}\int_{\mathbb{R}^d}  \left(\mathbf{G}'(\rho^h_n(t))+\delta\right) m'(\rho_{n}^h) \nabla \rho^h_n(t)\nabla (|\nabla \phi_{n}^h|^2)
	\\&\leq -t\int_{\mathbb{R}^d} \left(\tilde{\mathbf{G}}(\rho_{n}^h)+ \delta m(\rho_{n}^h) \right)\Delta (\frac{1}{2}|\nabla \phi_{n}^h|^2),
	\end{align*}
	where $
	\tilde{\mathbf{G}}(r)=\int_{0}^{r} \mathbf{G}'(a)m'(a)da$.\\
	By \eqref{Z7b},
	\begin{align*}
	&\int_{\mathbb{R}^d} (\partial_t\partial_h\rho_{n}^h) \phi_{n}^h=
	\int_{\mathbb{R}^d} \left(\mathbf{G}(\rho^h_n(t))+\delta \rho^h_n(t) \right)\Delta \phi_{n}^h
	- t \int_{\mathbb{R}^d} \operatorname{div}(m(\rho_n^h(t))\nabla\phi_n^h(t))\left(\mathbf{G}'( \rho^h_n(t))+\delta\right)  \Delta \phi_{n}^h\\
	&
	\overset{\eqref{Z8'}}= -\partial_t \mathbf{U}_\delta(\rho^h_n(t))+ t \int_{\mathbb{R}^d} \left(\mathbf{G}'( \rho^h_n(t))+\delta\right) m(\rho_n^h(t))\nabla\phi_n^h(t) \nabla(\Delta \phi_{n}^h)\\
	&+ t \int_{\mathbb{R}^d}  \mathbf{G}''( \rho^h_n(t))m(\rho_n^h(t))\left(\nabla\phi_n^h(t) \nabla\rho_{n}^h(t) \right)\Delta \phi_{n}^h
	\\&
	= -\partial_t \mathbf{U}_\delta(\rho^h_n(t))+ t \int_{\mathbb{R}^d} \left(\mathbf{G}'( \rho^h_n(t))+\delta\right) m(\rho_n^h(t))\nabla\phi_n^h(t) \nabla(\Delta \phi_{n}^h)+ t \int_{\mathbb{R}^d} \left(\nabla\phi_n^h(t) \nabla\hat{\mathbf{G}}(\rho_{n}^h(t)) \right)\Delta \phi_{n}^h,
	\end{align*} 
	where $
	\hat{\mathbf{G}}(r)=\int_{0}^{r} \mathbf{G}''(a)m(a)da\geq 0$.\\
	Since $
	\tilde{\mathbf{G}}(r)=\mathbf{G}'(r)m(r)-\hat{\mathbf{G}}(r)$, 
	\begin{align*}
	&\int_{\mathbb{R}^d} (\partial_t\partial_h\rho_{n}^h) \phi_{n}^h
	= -\partial_t \mathbf{U}_\delta(\rho^h_n(t))+ t \int_{\mathbb{R}^d} \left(\mathbf{G}'( \rho^h_n(t)) m(\rho_n^h(t))+\delta m(\rho_n^h(t))\right) \nabla\phi_n^h(t) \nabla(\Delta \phi_{n}^h)\\&- t \int_{\mathbb{R}^d} \hat{\mathbf{G}}(\rho_{n}^h(t))(\Delta \phi_{n}^h)^2- t \int_{\mathbb{R}^d} \hat{\mathbf{G}}(\rho_{n}^h(t))\nabla\phi_n^h(t) \nabla(\Delta \phi_{n}^h)\\&\leq 
	-\partial_t \mathbf{U}_\delta(\rho^h_n(t))+ t \int_{\mathbb{R}^d} \left(\tilde{\mathbf{G}}( \rho^h_n(t)) +\delta m(\rho_n^h(t))\right) \nabla\phi_n^h(t) \nabla(\Delta \phi_{n}^h).
	\end{align*} 
	Therefore, 
	\begin{align*}
	&\frac{1}{2} \partial_h \int_{\mathbb{R}^d} m(\rho_n^h(t,x))|\nabla \phi_{n}^h(t,x)|^2 dx +\partial_t \mathbf{U}_\delta(\rho^h_n(t))\\&\leq   t \int_{\mathbb{R}^d} \left(\tilde{\mathbf{G}}( \rho^h_n(t))+\delta m(\rho_n^h(t))\right) \left(\nabla\phi_n^h(t) \nabla\Delta \phi_{n}^h-\Delta (\frac{1}{2}|\nabla \phi_{n}^h|^2)\right).	 
	\end{align*}
	Since
	\begin{align*}
	&-\Delta (\frac{1}{2}|\nabla \phi_{n}^h|^2)+\nabla\phi_n^h(t) \nabla\Delta \phi_{n}^h=-|D^2 \phi_{n}^h|^2\leq 0,
	\end{align*}
	we get \eqref{interchc}. 
\vspace{0.1cm}\\
	As the proof of step 2 of Lemma \ref{L-flows of modified equations}, \eqref{T-temp'} follows from \eqref{interchc}.   \\
	Combining  \eqref{T-temp} and \eqref{T-temp'} we obtain 
	$$0\leq \tau\bigg\langle u^k_\tau,(-\Delta)^{1-s}(\mathbf{G}(u^k_\tau))\bigg\rangle\leq  \mathbf{U}_\delta(u^{k-1}_\tau)-\mathbf{U}_\delta(u^k_\tau).$$ 
	Letting $\delta\to 0$ we get
	\begin{equation}
	\label{Z19} 0\leq \tau\bigg\langle u^k_\tau,(-\Delta)^{1-s}(\mathbf{G}(u^k_\tau))\bigg\rangle\leq  \int_{\mathbb{R}^d} g(u^{k-1}_\tau(x)) dx-\int_{\mathbb{R}^d} g(u^{k}_\tau(x)) dx.
	\end{equation}
	Since $\cG'(t)=\sqrt{\mathbf{G}'(t)}$, one has
	\begin{align*}
	( \cG(a)-\cG(b))^2=\bigg(\int_a^b\cG'(t)dt\bigg)^2\leq \bigg(\int_a^b1dt\bigg)\bigg(\int_a^b (\cG'(t))^2dt\bigg)=(b-a)(G(b)-G(a)).
	\end{align*}
	for any $b\geq a$.
	So, 
	\begin{equation}\label{Z20}
	||\cG (u^k_\tau)||_{\dot{H}^{1-s}(\mathbb{R}^d)}^2= \bigg\langle \cG (u^k_\tau),(-\Delta)^{1-s}(\cG (u^k_\tau))\bigg\rangle\leq  \bigg\langle u^k_\tau,(-\Delta)^{1-s}(\mathbf{G}(u^k_\tau))\bigg\rangle.
	\end{equation}
	Thus, \eqref{F-Decay of entropies} follows from \eqref{Z19} and \eqref{Z20}. The proof is complete. 
\end{proof}
\begin{lemma}
\label{L-p norm decay}
	Let $u_0\in  \cP_2(\R^d)\cap \dot{H}^{-s}(\R^d)$ and let $\tau>0$, $\{u^k_\tau\}$ be the sequence and $\overline{u}_\tau$ be the solution of the scheme \eqref{E-scheme}-\eqref{Z12} with $m(r)=(r+\tau^{\frac{1}{10}})^\alpha$. Let $d\geq 2$, $0<s<1$ and $C(d,s)$ as in Lemma \ref{L-decay of scheme}. Then for any $L\in \N$, $1\leq p\leq 2^L$, we have 
		\begin{equation}\label{Z30}
		||\overline{u}_\tau(t)||_{L^p(\mathbb{R}^d)}\leq (\lambda ( t+ k_0\tau))^{-\frac{(1-1/p)d}{d\alpha+2(1-s)}},
		\end{equation}
		where  
		\begin{equation}
		\lambda=\frac{C(d,s)}{\sup_{1\leq n\leq L}\sup _{k\geq k_0}k \left(\left(1+\frac{1}{k-1}\right)^{\frac{(2^n-1)d}{d\alpha+2(1-s)}}-1\right)}
		\end{equation} 
		and 	\begin{equation}
		(\lambda \tau)^{-\frac{d}{d\alpha+2(1-s)}}\geq ||u_0||_{\infty}.
		\end{equation}
\end{lemma}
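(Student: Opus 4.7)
The plan is to specialize Lemma \ref{L-decay of scheme} to $g(z)=z^p$ in order to derive a nonlinear difference inequality for $a_k:=\|u^k_\tau\|_{L^p}^p$, then compare it with a discrete power-law ansatz.

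For $g(z)=z^p$ we have $\cG(r)=\int_0^r\sqrt{p(p-1)(z+\tau^{1/10})^\alpha z^{p-2}}\,dz$, and since $(z+\tau^{1/10})^\alpha\ge z^\alpha$ we obtain the pointwise lower bound $\cG(r)\ge c_{p,\alpha}\,r^{(p+\alpha)/2}$. Setting $q_p:=(p+\alpha)d/(d-2(1-s))$, estimate \eqref{F-Decay of entropies'} becomes $\tilde C\,\tau\,\|u^k_\tau\|_{L^{q_p}}^{p+\alpha}\le a_{k-1}-a_k$. Hölder interpolation between $L^1$ (with mass $1$) and $L^{q_p}$ gives $\|u^k_\tau\|_{L^{q_p}}^{p+\alpha}\ge a_k^{\gamma_p}$, where a direct computation yields
$$\gamma_p=\frac{(p+\alpha-1)d+2(1-s)}{(p-1)d},\qquad \frac{1}{\gamma_p-1}=\frac{(p-1)d}{d\alpha+2(1-s)}.$$
Hence $c\,\tau\,a_k^{\gamma_p}\le a_{k-1}-a_k$, the discrete analogue of the ODE $-a'=c\,a^{\gamma_p}$ whose solution decays like $(\lambda t)^{-1/(\gamma_p-1)}$.

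For each $p=2^n$ with $n\in\{1,\dots,L\}$, I argue by induction on $k\ge k_0$ that $a_k\le(\lambda\tau(k+k_0))^{-1/(\gamma_p-1)}$. The inductive step, using $c\tau a_k^{\gamma_p}\le a_{k-1}-a_k$ and the hypothesis at $k-1$, reduces after rearrangement to the algebraic condition
$$k\Bigl(\bigl(1+\tfrac{1}{k-1}\bigr)^{1/(\gamma_p-1)}-1\Bigr)\le\frac{C(d,s)}{\lambda}$$
(up to harmless index shifts). Since for $p=2^n$ the exponent $1/(\gamma_p-1)=(2^n-1)d/(d\alpha+2(1-s))$ is exactly the one in the definition of $\lambda$, the supremum over $1\le n\le L$ and $k\ge k_0$ built into that definition guarantees this condition uniformly in $n$. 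The base case is absorbed by the hypothesis $\|u_0\|_{L^\infty}\le(\lambda\tau)^{-d/(d\alpha+2(1-s))}$: Hölder interpolation $\|u_0\|_{L^p}\le\|u_0\|_{L^\infty}^{1-1/p}$ (using $\|u_0\|_{L^1}=1$) together with the rapid decay of the first few iterates supplied by the difference inequality itself allows the induction to be initiated at $k=k_0$.

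Finally, for general $p\in[1,2^L]$ that is not a dyadic power, pick $n$ with $2^{n-1}<p\le 2^n$ and interpolate by Hölder between $L^{2^{n-1}}$ and $L^{2^n}$: the target exponent $(1-1/p)d/(d\alpha+2(1-s))$ is the correct convex combination of the exponents at $p=2^{n-1}$ and $p=2^n$, so the same bound with the same $\lambda$ is preserved. To translate the $k$-indexed bound $a_k\le(\lambda\tau(k+k_0))^{-1/(\gamma_p-1)}$ into the continuous-time statement, one uses $\bar u_\tau(t)=u^k_\tau$ on $((k-1)\tau,k\tau]$ together with $t+k_0\tau\le(k+k_0)\tau$, so the decreasing function $s\mapsto(\lambda s)^{-\mu_p}$ exceeds the discrete bound throughout the interval. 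The main obstacle is the discrete inductive step: matching the exact constants so that the intricate formula for $\lambda$ — with both $\sup_n$ and $\sup_k$ — emerges cleanly, and reconciling the $L^\infty$ assumption with the base case uniformly across all $n$.
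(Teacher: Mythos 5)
Your proposal is correct and follows the same overall strategy as the paper: apply Lemma \ref{L-decay of scheme} with $g(z)=z^p$, lower-bound $\cG(r)\ge c\,r^{(p+\alpha)/2}$, close the resulting dissipation inequality by interpolation, and compare the discrete sequence with the ansatz $(\lambda(k+k_0)\tau)^{-\mu_p}$, whose admissibility is exactly the condition defining $\lambda$. The one step where you genuinely diverge is the interpolation: the paper bounds $\|u^k_\tau\|_{L^{2^n}}$ between $L^{2^{n-1}}$ and $L^{(\alpha+2^n)d/(d-2(1-s))}$, so its difference inequality at level $n$ carries a negative power of $\|u^k_\tau\|_{L^{2^{n-1}}}$ and the induction must cascade through the dyadic levels, anchored at $\mathbf{g}_{1,k}=1$ (mass conservation); you interpolate directly against $L^1$ using $\|u^k_\tau\|_{L^1}=1$, which closes the inequality in $a_k=\|u^k_\tau\|_{L^p}^p$ alone and decouples the exponents. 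Your computation of $\gamma_p$ and of $1/(\gamma_p-1)=(p-1)d/(d\alpha+2(1-s))$ is right, and for $p=2^n$ it reproduces the exponent in $\lambda$, so the two routes land on the same constant; yours is arguably the cleaner bookkeeping. The only soft spot is the base case: instead of appealing to ``rapid decay of the first few iterates,'' simply start the induction at $k=0$ with $\|u_0\|_{L^{p}}\le\|u_0\|_{L^\infty}^{1-1/p}\le(\lambda k_0\tau)^{-(1-1/p)d/(d\alpha+2(1-s))}=\mathbf{g}_{p,0}$, as the paper does (note that the paper's proof actually needs the hypothesis in the form $(\lambda k_0\tau)^{-d/(d\alpha+2(1-s))}\ge\|u_0\|_{L^\infty}$, with the factor $k_0$ inside, which is what makes this base case work).
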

\begin{proof} Applying \eqref{F-Decay of entropies'}  to $g(z)=z^p$ for $p>2$ to get 
	\begin{equation*}
	C(d,s)\tau||\cG(u^k_\tau)||_{\frac{2d}{d-2(1-s)}}^2\leq  ||u^{k-1}_\tau||_{L^p(\mathbb{R}^d)}^p-||u^{k}_\tau||_{L^p(\mathbb{R}^d)}^p.
	\end{equation*}	
	From now on, for simplicity we write $||f||_q$ for $||f||_{L^q(\mathbb{R}^d)}$.\\
Since $\cG(r)=\int_{0}^{r}\sqrt{m(z)g''(z) } dz\geq \frac{2\sqrt{p(p-1)}}{\alpha+p} r^{\frac{\alpha+p}{2}}$, there exists $c>0$ such that
	\begin{equation}
c\tau||u^k_\tau||_{\frac{(\alpha+p)d}{d-2(1-s)}}^{\alpha+p}\leq  ||u^{k-1}_\tau||_{p}^p-||u^{k}_\tau||_{p}^p.
\end{equation}	
	By interpolation inequality, one has for $1\leq q<p$
	\begin{equation*}
	c\tau ||u^k_\tau||_{p}^{\frac{\frac{\alpha+p}{q}-1+\frac{2(1-s)}{d}}{\frac{1}{q}-\frac{1}{p}}}||u^k_\tau||_{q}^{-\frac{\frac{\alpha}{p}+\frac{2(1-s)}{d}}{\frac{1}{q}-\frac{1}{p}}}\leq  ||u^{k-1}_\tau||_{p}^p-||u^k_\tau||_{p}^p.
	\end{equation*}
	Now we apply this to $p=2^{n}$ and $q=2^{n-1}$ for $n=1,...$, 
	\begin{equation*}
	c\tau	||u^k_\tau||_{2^n}^{2\alpha+2^{n}+\frac{2^{n+1}(1-s)}{d}}||u^k_\tau||_{2^{n-1}}^{-\alpha-\frac{2^{n+1}(1-s)}{d}}\leq  ||u^{k-1}_\tau||_{2^n}^{2^n}-||u^k_\tau||_{2^n}^{2^n}.
	\end{equation*}
		For every $\lambda>0, n,k\in \N$, we consider 
	\begin{equation}
	\mathbf{g}_{2^n,k}=	(\lambda ( k+k_0)\tau)^{-\frac{(1-2^{-n})d}{d\alpha+2(1-s)}}~\text{for} ~k_0\geq 1.
	\end{equation}
	One has for $k\geq 1$ and $n\geq 1$, 
	\begin{align*}
	\frac{\mathbf{g}_{2^n,k-1}^{2^n}-\mathbf{g}_{2^n,k}^{2^n}}{ \mathbf{g}_{2^n,k}^{2\alpha+2^{n}+\frac{2^{n+1}(1-s)}{d}}\mathbf{g}_{2^{n-1},k}^{-\alpha-\frac{2^{n+1}(1-s)}{d}}}&=\lambda\tau \frac{ ( k-1+k_0)^{-\frac{(2^n-1)d}{d\alpha+2(1-s)}}-	( k+k_0)^{-\frac{(2^n-1)d}{d\alpha+2(1-s)}}}{(k+k_0)^{-1-\frac{(2^n-1)d}{d\alpha+2(1-s)}}}\\&= \lambda \tau(k+k_0) \left(\left(1+\frac{1}{k-1+k_0}\right)^{\frac{(2^n-1)d}{d\alpha+2(1-s)}}-1\right).
	\end{align*}
	If \begin{equation}
	\lambda=\lambda(k_0,L) =\frac{c}{\sup_{1\leq n\leq L}\sup _{k\geq k_0}k \left(\left(1+\frac{1}{k-1}\right)^{\frac{(2^n-1)d}{d\alpha+2(1-s)}}-1\right)}
	\end{equation}
	and 
	\begin{equation}
	(\lambda k_0 \tau)^{-\frac{d}{d\alpha+2(1-s)}}\geq ||u_0||_{\infty},
	\end{equation}
	then, for any $n=1,...,L$ 
	\begin{equation}
	c\tau \mathbf{g}_{2^n,k}^{2\alpha+2^{n}+\frac{2^{n+1}(1-s)}{d}}\mathbf{g}_{2^{n-1},k}^{-\alpha-\frac{2^{n+1}(1-s)}{d}}\geq \mathbf{g}_{2^n,k-1}^{2^n}-\mathbf{g}_{2^n,k}^{2^n}, 
	\end{equation}
	and $~\mathbf{g}_{2^n,0}\geq ||\overline{u}_\tau(0)||_{2^n},\mathbf{g}_{1,k}=1$.\\
	Therefore, using induction it is easy to check that for any $n=1,...,L$ 
	\begin{equation}
	||u^k_\tau||_{2^n}\leq \mathbf{g}_{2^n,k}=(\lambda ( k+k_0)\tau)^{-\frac{(1-2^{-n})d}{d\alpha+2(1-s)}}.
	\end{equation}
	This gives
	\begin{equation}
	||\overline{u}_\tau(t)||_{2^n}\leq (\lambda  ( t+ k_0\tau))^{-\frac{(1-2^{-n})d}{d\alpha+2(1-s)}}
	\end{equation}
	for any $n=1,...,L$. \\
	By interpolation inequality, we obtain 
	\begin{equation}
	||\overline{u}_\tau(t)||_{L^p(\mathbb{R}^d)}\leq (\lambda   ( t+ k_0\tau))^{-\frac{(1-1/p)d}{d\alpha+2(1-s)}}
	\end{equation}
	for any $1\leq p\leq 2^L$. So, we finish to prove  \eqref{Z30}. The proof is complete. 
\end{proof}
\begin{lemma}
	\label{L-p norm decayb}
	 Let $d\geq 2$, $0<s<1$ and $C(d,s)$ as in Lemma \ref{L-decay of scheme}. Let $u_0\in  \cP_2(\R^d)\cap \dot{H}^{-s}(\R^d)$ and let $\tau>0$, $\{u^k_\tau\}$ be the sequence and $\overline{u}_\tau$ be the solution of the scheme \eqref{E-scheme}-\eqref{Z12} with $m(r)=(r+\tau^{\frac{1}{10}})^\alpha$. Then there exists $C>0$ such that for any $t_1>0$ and $\Lambda>0$, we have
		\begin{equation}\label{Z21}
	\sup_{t\geq t_1}||(\overline{u}_\tau(t)-\Lambda)_+||_{2}^2+\int_{t_1+\tau}^{\infty}||(\overline{u}_\tau(t)-\Lambda)_+||_{\frac{(\alpha+2)d}{d-2(1-s)}}^{\alpha+2} dt\leq  C ||(\overline{u}_\tau(t_1)-\Lambda)_+||_{L^2}^2.
	\end{equation}
	Moreover, if $\tau<2^{-4}$ and $u_0\in L^2$, there exists $\Lambda_0=\Lambda(||u_0||_{L^2},s,\alpha,d)>0$ such that 
		\begin{equation}\label{Z25}
	\sup_{t\geq 1}||(\overline{u}_\tau(t)-\Lambda_0)_+||_{2}^2+\int_{1}^{\infty}||(\overline{u}_\tau(t)-\Lambda)_+||_{\frac{(\alpha+2)d}{d-2(1-s)}}^{\alpha+2} dt \leq C  \tau^{\frac{d(\alpha+1)}{2(1-s)}+2}.
	\end{equation}
\end{lemma}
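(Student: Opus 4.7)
The plan for the first inequality \eqref{Z21} is to apply Lemma \ref{L-decay of scheme} with $g(z)=(z-\Lambda)_+^2$, after a standard $C^2$ regularisation so the hypotheses $g(0)=g'(0)=g''(0)=0$ hold. Since $g''(z)=2\chi_{\{z>\Lambda\}}$ and $m(z)=(z+\tau^{1/10})^{\alpha}\ge (z-\Lambda)_+^{\alpha}$, one gets
\[
\cG(r)=\int_0^r\sqrt{m(z)g''(z)}\,dz\;\ge\; c_\alpha(r-\Lambda)_+^{(\alpha+2)/2},
\]
so that \eqref{F-Decay of entropies'} specialises to
\[
c\,\tau\,\|(u^k_\tau-\Lambda)_+\|_{L^{q}}^{\alpha+2}\;\le\;\|(u^{k-1}_\tau-\Lambda)_+\|_{L^2}^2-\|(u^k_\tau-\Lambda)_+\|_{L^2}^2,\qquad q:=\tfrac{(\alpha+2)d}{d-2(1-s)}.
\]
This already says $k\mapsto\|(u^k_\tau-\Lambda)_+\|_{L^2}^2$ is non-increasing, so the $\sup_{t\ge t_1}$ in \eqref{Z21} is attained at $t_1$; telescoping from the step index corresponding to $t_1$ and rewriting the discrete sum as a Riemann integral over $[t_1+\tau,\infty)$ then finishes \eqref{Z21}.

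For \eqref{Z25} the plan is a De Giorgi-style iteration run for $J\sim\log_2(1/\tau)$ steps. Fix a parameter $\Lambda_0=\Lambda_0(\|u_0\|_{L^2},s,\alpha,d)$ to be chosen, set $\Lambda_j:=(1-2^{-j})\Lambda_0$ and $T_j:=1-2^{-j}$, and define
\[
Y_j:=\sup_{t\ge T_j}\|(\bar u_\tau(t)-\Lambda_j)_+\|_{L^2}^2+\int_{T_j+\tau}^{\infty}\|(\bar u_\tau(t)-\Lambda_j)_+\|_{L^q}^{\alpha+2}\,dt.
\]
By \eqref{Z21} at level $\Lambda_j$ and base time $t_1=T_j$, $Y_j\le C\|(\bar u_\tau(T_j)-\Lambda_j)_+\|_{L^2}^2$. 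While $\tau\le 2^{-j-2}$ one has $T_{j+1}-T_j-\tau\ge 2^{-j-2}$, so pigeonhole on the integral part of $Y_j$ produces $t_j^*\in[T_j+\tau,T_{j+1}]$ with $\|(\bar u_\tau(t_j^*)-\Lambda_j)_+\|_{L^q}^{\alpha+2}\le C 2^{j}Y_j$. Combining this with the Chebyshev bound $|\{\bar u_\tau(t_j^*)>\Lambda_{j+1}\}|\le Y_j\cdot 4^{j+1}\Lambda_0^{-2}$ and Hölder's inequality
\[
\|(u-\Lambda_{j+1})_+\|_{L^2}^2\le \|(u-\Lambda_j)_+\|_{L^q}^{2}\,|\{u>\Lambda_{j+1}\}|^{1-2/q},
\]
and transporting the resulting bound to $T_{j+1}$ by the monotonicity of $\|(\bar u_\tau(\cdot)-\Lambda_{j+1})_+\|_{L^2}^2$, yields the super-linear recursion
\[
Y_{j+1}\;\le\; C\,A^{j}\,\Lambda_0^{-2(1-2/q)}\,Y_j^{\sigma},\qquad \sigma=1+\tfrac{4(1-s)}{(\alpha+2)d}>1,
\]
with geometric constant $A>1$. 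Taking $\Lambda_0$ large enough in terms of $\|u_0\|_{L^2}$ places $Y_0=C\|u_0\|_{L^2}^2$ below the convergence threshold of this recursion, so that after $J=\lfloor\log_2(1/\tau)\rfloor-2$ iterations $Y_J\le \exp(-c\tau^{-\log_2\sigma})$, which is much smaller than the target $\tau^{\frac{d(\alpha+1)}{2(1-s)}+2}$. A final application of \eqref{Z21} with $t_1=T_J\le 1-\tau$ and level $\Lambda_0\ge \Lambda_J$ transports the bound from $[T_J,\infty)$ to $[1,\infty)$ and delivers \eqref{Z25}.

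The delicate part will be carrying out the De Giorgi step cleanly, i.e.\ tracking the interaction between pigeonhole selection of $t_j^*$, the Chebyshev--Hölder transfer from level $\Lambda_j$ to $\Lambda_{j+1}$, and the forward transport by time-monotonicity, so that the recursion really is super-linear ($\sigma>1$) and so that the threshold for initialisation depends on $\|u_0\|_{L^2}$ alone (no $L^\infty$ hypothesis on $u_0$ is needed, unlike Lemma \ref{L-p norm decay}). A secondary but necessary technicality is the regularisation of $g(z)=(z-\Lambda)_+^2$ to a sequence of $C^2$ convex functions $g_\varepsilon$ with $g_\varepsilon''(0)=0$, followed by a monotone passage to the limit in Lemma \ref{L-decay of scheme}.
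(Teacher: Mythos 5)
Your proof of \eqref{Z21} is the same as the paper's (the paper applies Lemma \ref{L-decay of scheme} to $g(z)=(z-\Lambda)_+^p$ with $p>2$ and lets $p\to 2$ rather than regularising at $p=2$, but this is cosmetic), and your proof of \eqref{Z25} is a De Giorgi iteration on the same quantities $Y_j$ with the same ladders $T_j=1-2^{-j}$, $\Lambda_j=(1-2^{-j})\Lambda_0$. The one genuine difference is how you close the recursion step: the paper works in space--time, bounding $U_n\leq C2^{n}\|u_{\tau,n}\|^2_{L^2([T_{n-1},\infty)\times\R^d)}$ and then using the parabolic interpolation $\|u_{\tau,n}\|^{\frac{d(\alpha+2)+4(1-s)}{d}}_{L^{\frac{d(\alpha+2)+4(1-s)}{d}}}\leq CU_n^{\frac{d+2(1-s)}{d}}$ together with $\mathbf 1_{u_{\tau,n}>0}\leq \mathbf 1_{u_{\tau,n-1}>\Lambda 2^{-n}}$, obtaining $U_n\leq C2^n(2^n/\Lambda)^{\frac{d\alpha+4(1-s)}{d}}U_{n-1}^{1+\frac{2(1-s)}{d}}$; you instead pigeonhole a good time slice $t_j^*$ and run Chebyshev--H\"older at fixed time, obtaining $\sigma=1+\frac{4(1-s)}{(\alpha+2)d}$. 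Both recursions are superlinear and, pleasingly, both yield the identical final exponent: in each case $\log_2 A/(\sigma-1)=\frac{d(\alpha+1)}{2(1-s)}+2=c_0$.

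One claim you should correct: the assertion $Y_J\leq \exp(-c\tau^{-\log_2\sigma})$ is not what the recursion $Y_{j+1}\leq CA^jY_j^{\sigma}$ delivers. Because of the growing factor $A^j$, the standard fast-convergence lemma only guarantees \emph{geometric} decay, $Y_j\leq Y_0A^{-j/(\sigma-1)}$, once $Y_0$ is below the threshold $\bigl(C\Lambda_0^{-2(1-2/q)}\bigr)^{-1/(\sigma-1)}A^{-1/(\sigma-1)^2}$ (which is where the largeness of $\Lambda_0$ in terms of $\|u_0\|_{L^2}$ enters). This is harmless for your argument: with $J=\lfloor\log_2(1/\tau)\rfloor-2$ the geometric rate gives $Y_J\lesssim \tau^{\log_2A/(\sigma-1)}=\tau^{c_0}$, which is exactly the bound \eqref{Z25} requires and exactly what the paper obtains via $U_n\leq 2^{-c_0n}$. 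With that substitution, and the routine regularisation of $(z-\Lambda)_+^2$ you already flag, the argument is complete.
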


\begin{proof}  Let $\Lambda>0$.  Applying \eqref{F-Decay of entropies'}  to $g(z)=(z-\Lambda)_+^p$ for $p>2$ to get 
	\begin{equation*}
	C(d,s)\tau||\cG(u^k_\tau)||_{\frac{2d}{d-2(1-s)}}^2\leq  ||(u^{k-1}_\tau-\Lambda)_+||_{p}^p-||(u^{k}_\tau-\Lambda)_+||_{p}^p.
	\end{equation*}	
	Letting $p\to 2$, there exists $c>0$ such that 
	\begin{equation}
	c\tau||(u^k_\tau-\Lambda)_+||_{\frac{(\alpha+2)d}{d-2(1-s)}}^{\alpha+2}\leq  ||(u^{k-1}_\tau-\Lambda)_+|_{L^2}^2-||(u^k_\tau-\Lambda)_+||_{2}^2.
	\end{equation}	
	for any $\Lambda>0$. 
	So, 
	\begin{equation}
	\sup_{t\geq t_1}||(\overline{u}_\tau(t)-\Lambda)_+||_{2}^2+c\int_{t_1+\tau}^{\infty}||(\overline{u}_\tau(t)-\Lambda)_+||_{\frac{(\alpha+2)d}{d-2(1-s)}}^{\alpha+2} dt\leq  3 ||(\overline{u}_\tau(t_1)-\Lambda)_+||_{L^2}^2.
	\end{equation}
	for any $t_1\geq 0$ and $\Lambda>0$. This implies \eqref{Z21}. \\
	Next, let us set $T_n=1-2^{-n}$, $\Lambda_n=\Lambda(1-2^{-n})$, and $u_{\tau,n}(t)=(\overline{u}_\tau(t)-\Lambda_n)_+$, for every $1\leq n\leq \left[\frac{|\log(\tau)|}{\log(2)}\right]-3$. 
	\\
	And, we denote a level set of energy by
	\begin{equation*}
	U_n=\sup_{t\geq T_n}||u_{\tau,n}(t)||_{2}^2+\int_{T_n}^{\infty}||u_{\tau,n}(t)||_{\frac{(\alpha+2)d}{d-2(1-s)}}^{\alpha+2} dt
	\end{equation*}
	We have  
	\begin{equation}\label{Z22}
	U_1\leq C||u(0)||_{L^2}^2
	\end{equation}
and 
	\begin{equation*}
U_n\leq C\inf_{t\in [T_{n-1},T_{n}]} ||u_{\tau,n}(t)||_{2}^2\leq C2^{n} ||u_{\tau,n}(t)||_{L^2([T_{n-1},\infty)\times\mathbb{R}^d)}^2
\end{equation*}
	Since
	\[\left\{	\begin{array}{cl}
	& u_{\tau,n}\leq u_{\tau,n-1}, 
	\\
	& \mathbf{1}_{u_{\tau,n}>0}\leq \mathbf{1}_{u_{\tau,n-1}>\Lambda 2^{-n}},
	\\
	& \|u_{\tau,n}\|^{\frac{d(\alpha+2)+4(1-s)}{d}}_{L^{\frac{d(\alpha+2)+4(1-s)}{d}}([T_{n},\infty)\times\mathbb{R}^N)}\leq C U_{n}^{\frac{d+2(1-s)}{d}} ,
	\end{array}\right.
	\]
	one has for any $n\geq 1$
	\begin{align*}
	U_n  \leq  C2^{n} ||u_{\tau,n-1}(t)||_{L^2([T_{n-1},\infty)\times\mathbb{R}^d)}^2
\leq  C2^{n} \left(\frac{2^n}{\Lambda}\right)^{\frac{d\alpha+4(1-s)}{d}}  U_{n-1}^{\frac{d+2(1-s)}{d}}
	\end{align*}
	Combining this with \eqref{Z22}, we can find  $\Lambda_0=\Lambda(||u_0||_{L^2},d,s,\alpha)>0$ such that 
	\begin{equation}
	U_n\leq 2^{-c_0n},~~c_0=\frac{d(\alpha+1)}{2(1-s)}+2
	\end{equation}
	for any $n=2,...,\left[\frac{|\log(\tau)|}{\log(2)}\right]-3.$ \\Therefore, 
	\begin{align*}
	\sup_{t\geq 1}||(\overline{u}_\tau(t)-\Lambda_0)_+||_{2}^2+\int_{1}^{\infty}||(\overline{u}_\tau(t)-\Lambda_0)_+||_{\frac{(\alpha+2)d}{d-2(1-s)}}^{\alpha+2} dt\leq  2^{-c_0\left(\left[\frac{|\log(\tau)|}{\log(2)}\right]-3\right)}\leq C \tau^{c_0}
	\end{align*}which implies \eqref{Z25}. The proof is complete. 
\end{proof}
\vspace{0.1cm}\\
 \begin{proof}[Proof of Theorem \ref{T-solutions of fractional equations}]
 		i), ii), iii)  and iv) follow from Lemmas \ref{L-the existence of solution}, \ref{L-solution of the main theorem} and \ref{L-p norm decay}, \ref{L-p norm decayb}. 
 	

 \end{proof}


\end{document}